\documentclass[a4paper,11pt]{article}
\usepackage{amsmath} 
\usepackage{amsthm} 
\usepackage{amssymb}	
\usepackage{multicol} 
\usepackage{soul} 
\usepackage{cancel}
\usepackage{dsfont,bbm}
\usepackage{empheq,cases}
\usepackage[export]{adjustbox}

\DeclareFontFamily{OT1}{pzc}{}
\DeclareFontShape{OT1}{pzc}{m}{it}{<-> s * [1.10] pzcmi7t}{}
\DeclareMathAlphabet{\mathpzc}{OT1}{pzc}{m}{it}

\usepackage{calligra}
\DeclareMathAlphabet{\mathcalligra}{T1}{calligra}{m}{n}

\usepackage[low-sup]{subdepth}
\usepackage{hyperref}

\providecommand{\keywords}[1]{\textbf{\textit{Keywords: }} #1}

 
 
 
\renewcommand{\div}[1]{{\nabla} \cdot #1} 
\let\baraccent=\= 
\renewcommand{\=}[1]{\stackrel{#1}{=}} 

\newcommand{\norm}[1]{\left\| #1 \right\|}

\newcommand{\inv}{^{-1}}

\newcommand{\ep}{\varepsilon}

\newcommand{\bs}[1]{\ensuremath{\boldsymbol{#1}}}


\usepackage{graphicx}
\usepackage{amsmath,amsthm,amssymb,bm,color,comment}
\usepackage{mathrsfs,enumerate}
\usepackage{subfig}
\usepackage{caption}
\usepackage[font=small,format=plain,labelfont=bf,textfont=it]{caption}
\captionsetup{margin=0.05\textwidth}
\captionsetup{margin=10pt}

\usepackage{url}
\numberwithin{equation}{section}
\definecolor{refkey}{rgb}{0.9451,0.2706,0.4941}
\definecolor{labelkey}{rgb}{0.9451,0.2706,0.4941}
\newtheorem{theorem}{Theorem}[section]
\newtheorem*{theorem*}{Theorem}
\newtheorem{cor}[theorem]{Corollary}
\newtheorem{lem}[theorem]{Lemma}

\newtheorem{prop}[theorem]{Proposition}
\theoremstyle{definition}

\newtheoremstyle{myremstyle}
  {\topsep} 
  {1.2\topsep} 
  {} 
  {} 
  {\itshape} 
  {.} 
  {.5em} 
  {} 

\theoremstyle{myremstyle} \newtheorem{rem}{Remark}

\newcommand\bksq{$~\hbox{\vrule width 2.5pt depth 2.5 pt height 3.5 pt}$}

\usepackage{fancybox,here,mathtools}

\newcommand*{\from}{\colon}
\newcommand*{\bigmid}{\:\big|\:}
\newcommand*{\Bigmid}{\:\Big|\:}


\newcommand{\bbR}{{\mathbb{R}}}
\newcommand{\bbN}{{\mathbb{N}}}

\newcommand{\scrF}{{\mathscr{F}}}
\newcommand{\fraku}{{\mathfrak{u}}}

\newcommand{\dom}{D}
\newcommand{\bddom}{\overline{\dom}}

\DeclareMathOperator*{\esssup}{{ess}\sup}
\DeclareMathOperator*{\essinf}{{ess}\inf}
\DeclareMathOperator{\supp}{supp}


\newcommand{\G}{\mathcal{G}}

\newcommand{\Prob}{\mathbb{P}}
\newcommand{\e}{\mathrm{e}}

\newcommand{\E}{\mathbb{E}}

\newcommand{\dx}{\,{\rm d}x}
\newcommand{\dy}{\,{\rm d}y}
\newcommand{\dbsy}{\,{\rm d}\boldsymbol{y}}

\newcommand{\dr}{\ensuremath{\,{\rm d}r}}












\usepackage[
	style=numeric,
	backend=bibtex,
	backref=true,
	sorting=nyt,
	maxbibnames=99,
	firstinits=true,
	doi=false,
	isbn=false
	]{biblatex}

\DefineBibliographyStrings{english}{%
  backrefpage = {page},
  backrefpages = {pages},
}

\renewbibmacro{in:}{}

\DeclareFieldFormat{postnote}{#1}
\DeclareFieldFormat{multipostnote}{#1}
	
\AtEveryBibitem{
\clearfield{month}
\clearfield{number}
\clearlist{location}
}
\AtEveryBibitem{\clearlist{language}} 
\DeclareFieldFormat[article]{title}{#1}
\AtEveryBibitem{%
  \ifentrytype{book}{%
    \clearfield{pages}%
  }{%
  }%
}
\AtEveryBibitem{%
\ifentrytype{book}{
    \clearfield{url}%
}{}
\ifentrytype{article}{
    \clearfield{url}%
}{}
\ifentrytype{incollection}{
    \clearfield{url}%
}{}
}

\title{Quasi-Monte Carlo integration with product weights for elliptic PDEs with log-normal coefficients}
\headheight 0mm
\headsep 0mm
\textheight 625pt
\textwidth 450pt

\topmargin 12mm
\oddsidemargin 3pt
\evensidemargin -6pt

\usepackage[tikz]{mdframed}
\newcommand{\amin}{\check{a}}
\newcommand{\amax}{\hat{a}}
\newcommand{\mixedfirst}[2]{\frac{\partial^{|\fraku|} #1}{\partial #2_{\fraku}}} 
\newcommand{\Vdual}{{V^{'}}} 
\newcommand{\sumu}{\sum_{\fraku\subseteq\{1:s\}}}
\newcommand{\scbasis}{\psi}
\newcommand{\Besovp}{\mathtt{p}}
\newcommand{\Besovq}{\mathtt{q}}

\theoremstyle{definition}
\newtheorem{assumption}{Assumption}

\newtheorem{assumptionprime}{Assumption}

\bibliography{UNSW_1}

\newcommand{\Addresses}{{
	\bigskip
  \footnotesize
  \textsc{School of Mathematics and Statistics}\par\nopagebreak
  \textsc{University of New South Wales}\par\nopagebreak
  \textsc{Sydney NSW 2052, Australia}\vspace{1pt}\par\nopagebreak
  \textit{E-mail}: \texttt{\href{mailto:y.kazashi@unsw.edu.au}{y.kazashi@unsw.edu.au}}
}}

\author{Yoshihito Kazashi}
\begin{document}
\maketitle
\begin{abstract}
Quasi-Monte Carlo (QMC) integration {of} output functionals of solutions {of} the diffusion problem with a log-normal random coefficient is considered. 
The random coefficient is assumed to be given by {an} exponential of a Gaussian random field that is represented by {a series expansion of} some system of functions. 
Graham et al. \cite{GKNetal2014:lognormal} developed a lattice-based QMC theory for this problem and established {a} quadrature error decay {rate} $\approx1$ with respect to the number of quadrature points. The key assumption there was a suitable summability condition on the aforementioned system of functions. {As a consequence, product-order-dependent (POD) weights were used to construct the lattice rule. In this paper, a different assumption on the system is considered. This assumption, originally considered by Bachmayr et al. \cite{Bachmayr_etal_2016_ESAIM_part2} to utilise the locality of support of basis functions in the context of polynomial approximations applied to the same type of the diffusion problem, is shown to work well in the same lattice-based QMC method considered by Graham et al.: the assumption leads us to product weights, which enables the construction of the QMC method with a smaller computational cost than Graham et al.}
{A} quadrature error decay {rate} $\approx1$ is established, and {the theory developed here} is applied to a wavelet stochastic model. By {a} characterisation of the Besov smoothness, it is shown that a wide class of path smoothness can be treated with {this} framework.
\end{abstract}
\keywords{Quasi-Monte Carlo methods, Partial differential equations with random coefficients, Log-normal, Infinite dimensional integration}
\section{Introduction}
This paper is concerned with quasi-Monte Carlo (QMC) integration {of} output functionals of solutions {of} the diffusion problem with a random coefficient of the form
\begin{align}
-\nabla\cdot (a(x,\omega )\nabla u(x,\omega )) = f(x)\quad \text{in } D\subset \bbR^d,
\qquad 
u=0\quad\text{ on }\partial{\dom},
\label{eq:PDE}
\end{align}
where $\omega\in\Omega$ is an element of a suitable probability space $(\Omega,\scrF,\Prob)$ (clarified below), and $D\subset\bbR^d$ is a bounded domain with Lipschitz boundary. Our interest is in the log-normal case, that is, 
$a(\cdot,\cdot)\from {\dom}\times\Omega\to \bbR$ is assumed to have the form
\begin{align}
a(x,\omega )=a_*(x)+a_0(x)\exp(T(x,\omega ))\label{eq:random coeff}
\end{align}
with {continuous functions $a_*\ge0$, $a_0>0$,} and Gaussian random field $T(\cdot,\cdot)\from {D}\times\Omega\to \bbR$ represented by a series expansion
$$
T(x,\omega)=\sum_{j= 1}^\infty
Y_j (\omega)
\scbasis_{j}(x),\quad \text{ for all }x\in\dom,
$$ 
with a suitable system of functions $(\scbasis_{j})_{j\ge1}$.

To handle a wide class of $a$ and $f$, we consider the weak formulation of the problem \eqref{eq:PDE}. By $V$ we denote the zero-trace Sobolev space $H^1_0(\dom)$ endowed with the norm
\begin{align}
\norm{v}_V := \Big( \int_{\dom} |\nabla v(x)|^2\dx\Big)^{\frac12},
\end{align}
and by $V':=H^{-1}(\dom)$ the topological dual space of $V$.
For the given random coefficient $a(x,\omega )$, we define the {bilinear} form  $\mathscr{A}(\omega;\cdot,\cdot)\from V\times V\to\bbR$ by
\begin{align}
\Omega\ni\omega 
\mapsto
\mathscr{A}(\omega;v,w)
:=
\int_{\dom} a(x,\omega ) \nabla v(x)\cdot \nabla w(x)\dx\ 
\text{ for all }v,w\in V.
\end{align}
Then, for any $\omega\in\Omega$, the weak formulation of \eqref{eq:PDE} reads:
 {f}ind $u(\cdot,\omega)\in V$ such that 
\begin{align}
\mathscr{A}(\omega;u(\cdot,\omega),v)
=\langle f, v \rangle\quad
\text{ for all }v\in V,
\label{eq:wk formulation omega}
\end{align}
where $f$ is assumed to be in $V'$, and $\langle \cdot, \cdot \rangle$ denotes the duality paring between $V'$ and $V$. We impose further {conditions} to ensure the well-posedness of the problem, which we will discuss later. 

The ultimate goal is to compute $\E[\G(u(\cdot))]$, the expected value of $\G(u(\cdot,\omega))$, where $\G$ is a linear bounded functional on $V$. The problem \eqref{eq:PDE}, and of computing $\E[\G(u(\cdot))]$ often arises in many applications such as hydrology \cite{Dagan.G_1984_solute_transport,Naff.R.L_et_al_1998_part1,Naff.R.L_et_al_1998_part2}, and has attracted attention in computational uncertainty quantification (UQ). See, for example, \cite{Cohen.A_DeVore_2015_Acta,Schwab.C_Gittleson_2011_Acta,Kuo.F_Nuyens_2016_FoCM_survey} and references therein. Two major ways to tackle this problem are function approximation, and quadrature, in particular, quasi-Monte Carlo (QMC) methods.

Our interest is in QMC. It is now well known that the QMC methods beats the plain-vanilla Monte Carlo methods in various {settings} when applied to the problems of computing $\E[\G(u(\cdot))]$ (\cite{GKNetal2014:lognormal,Kuo.F_Nuyens_2016_FoCM_survey,Kuo.F_Schwab_Sloan_2012_SINUM}). Among the QMC methods, the algorithm we consider is \textit{randomly shifted lattice rules}.

Graham et al. \cite{GKNetal2014:lognormal} showed {that} when the randomly shifted lattice rules {are} applied to the class of PDEs we consider, a QMC convergence rate, in terms of expected root square mean root, $\approx1$ is achievable, which is known to be optimal for lattice rules in the function space they consider. More precisely, they showed {that} quadrature points for randomly shifted lattice rules that achieve such a rate can be constructed using an algorithm called component-by-component (CBC) construction. 
The algorithm {uses} \textit{weights}, which represents the relative importance of {subsets of} the variables of the integrand, as an input, and the cost of it is dependent on the type of weights. The weights considered in \cite{GKNetal2014:lognormal} are so-called product-order-dependent (POD) weights, {which were determined by minimising an error bound.} {For POD weights, }the CBC construction takes $\mathcal{O}(s n\log n+s^2n)$ operations, where $n$ is the number of quadrature points and $s$ is the dimension of truncation $\sum_{j= 1}^{s}
Y_j (\omega)
\scbasis_{j}(x)$. 

The contributions of the current paper are twofold: proof of a convergence rate $\approx1$ with product weights, and an application to a stochastic model with wavelets. 
In more detail, we show that for the currently considered problem, {the} CBC construction can be constructed with weights called product weights, and achieves the optimal rate $\approx1$ {in the function space we consider}, and further, we show that the developed theory can be applied to a stochastic model {which covers} a wide class of wavelet {bases}. 

Often in practice, we want to approximate the random coefficients well, and consequently $s$ {has to be} taken to be large, in which case the second term of $\mathcal{O}(s n\log n+s^2n)$ becomes dominant. 
{The use of the POD weights originates from the summability condition imposed on $(\psi_j)$ by Graham et al. \cite{GKNetal2014:lognormal}.
We consider a different condition, the one proposed by Bachmayr et al. \cite{Bachmayr_etal_2016_ESAIM_part2} to utilise the locality of supports of $(\psi_j)$ in the context of polynomial approximations applied to PDEs with random coefficients. We show that under this condition,} the shifted lattice rule for the PDE problem can be constructed with {a} CBC algorithm with the {computational cost} $\mathcal{O}(s n\log n)$, the cost with the product weights as shown in \cite{Dick.J_etal_2014_higher_order_Galerkin}. Further, the stochastic model we consider broadens the range of applicability of the QMC methods to the PDEs with log-normal coefficients. 
One concern about the conditions, in particular the summability condition on $(\scbasis_j)$, imposed in \cite{GKNetal2014:lognormal} is that it is so strong that only random coefficients with smooth realisations are in the scope of the theory. We show that at least for $d=1,2$, such random coefficients (e.g., realisations {with} just some H\"{o}lder smoothness) can be considered.

We note that the similar argument employed in the current paper is applicable to the randomly shifted lattice rules applied to PDEs with uniform random coefficients considered in \cite{Kuo.F_Schwab_Sloan_2012_SINUM}. One of the keys in the current paper is the estimate of the derivative given in Corollary \ref{cor:pd uy lognormal}. This result essentially follows from the results by Bachmayr et al. \cite{Bachmayr_etal_2016_ESAIM_part2}. The paper \cite{Bachmayr_etal_2016_ESAIM_part1}, Part I of their work \cite{Bachmayr_etal_2016_ESAIM_part2}, considers the uniform case, and the similar argument as the one presented here turns out to work almost in parallel.

Upon finalising this paper, we learnt about the two papers, one by Gantner et al. 
\cite{Gantner.R.N_etal_2016_SAM_report}, and the other by Herrmann and Schwab \cite{Herrmann.L_Schwab_2016_SAM_report}. 
Our works share the same spirit in that we {are all} inspired by the work by Bachmayr et al. \cite{Bachmayr_etal_2016_ESAIM_part1,Bachmayr_etal_2016_ESAIM_part2}. 
The interest of Gantner et al. \cite{Gantner.R.N_etal_2016_SAM_report} is in the uniform case. They consider not only the randomly shifted lattice rules but also higher order QMCs. Since our interest was on the randomly shifted lattice rule in the uniform case, and our results are a proper subset of their work \cite{Gantner.R.N_etal_2016_SAM_report}, we {defer} to \cite{Gantner.R.N_etal_2016_SAM_report} for the uniform case. 

As for the log-normal case we provide a different, {arguably simpler, }proof for the same convergence rate with the exponential weight function, and we discuss the roughness of the realisations that can be considered.

Herrmann and Schwab \cite{Herrmann.L_Schwab_2016_SAM_report} develops a theory under the setting essentially the same as ours. In contrast to our paper, they treat the truncation error in a general setting, and as for the QMC integration error, they consider both the exponential weight functions and the Gaussian weight function for the weighted Sobolev space.
As for the exponential weight function, the current paper and \cite{Herrmann.L_Schwab_2016_SAM_report} impose essentially the same assumptions 
(Assumption \ref{assump:B} below),
 and show the same convergence rate. However, our proof strategy is different, which turns out to result in different (product) weights, (and a different constant, although it does not seem to be easy to say which is bigger). 
Further, in contrast to \cite{Herrmann.L_Schwab_2016_SAM_report}, we provide a discussion of the roughness of the realisations of random coefficients as mentioned above. The log-normal case, in comparison to the uniform case where the ``random parameter{s}'' can be uniformly bounded, is ``intrinsically random'' in the sense that the {magnitude of each} parameter can be {arbitrarily large}. As a consequence, the connection between the smoothness of the spatial basis and the one of the smooth realisations are not immediately clear. 
In Section \ref{sec:applications}, we provide a discussion via the Besov \emph{characterisation} of the realisations of the random coefficients and the embedding results. 

The outline of the rest of the paper is as follows. 
In Section \ref{sec:setting}, we describe the problem we consider in detail. Then, in Section 
\ref{sec:QMC error} we  develop the QMC theory applied to the PDE problem with log-normal coefficients using the product weights.
Section \ref{sec:applications} provides an application of the theory{:} we consider a stochastic model represented by {a} wavelet Riesz basis. 
Then, we close this paper with concluding remarks in Section \ref{sec:conclusion}.

\section{Setting}\label{sec:setting}
We assume {that} the Gaussian random field $T$ admits a series representation
$
T(x,\omega)=\sum_{j= 1}^\infty
Y_{j}(\omega)
\scbasis_{j}(x),
$
where $\{Y_j\}$ is a collection of independent standard normal random variables on a suitable probability space $(\Omega,\scrF,\Prob)$, and $(\scbasis_{j})$ is a system of real-valued measurable functions on $\dom$. 
For simplicity we fix $(\Omega,\scrF,\Prob):=(\bbR^{\bbN},\mathcal{B}(\bbR^{\bbN}),\Prob_{Y})$, where $\bbN:=\{1,2,\dotsc,\}$, $\mathcal{B}(\bbR^{\bbN})$ is the Borel $\sigma$-algebra generated by the product topology in $\bbR^{\bbN}$, and $\Prob_{Y}:=\prod_{j=1}^{\infty}\Prob_{Y_j}$ is the product measure on $(\bbR^{\bbN},\mathcal{B}(\bbR^{\bbN}))$ defined by the standard normal distributions $\{\Prob_{Y_j} \}_{j\in\bbN}$ on $\bbR$ (see, for example, \cite[Chapter 2]{Ito.K_1984_book_intro_prob} for details). 
Then, for each $\bs{y}\in \Omega$ we may see $Y_j(\bs{y})$ ($j\in\bbN$) as given by the projection (or the canonical coordinate function)
$$
\Omega=\bbR^{\bbN}\ni
\bs{y}\mapsto Y_j(\bs{y})=:y_j\in\bbR.
$$
Note in particular that from the continuity of the projection, the mapping
$\bs{y}\mapsto y_j$ is $\mathcal{B}(\bbR^{\bbN})/\mathcal{B}(\bbR)$-measurable.

In the following, we write $T$ above as
\begin{align}
T(x,\bs{y})=\sum_{j= 1}^\infty
y_j 
\scbasis_{j}(x),\quad \text{ for all }x\in\dom,
\label{eq:rf rep}
\end{align}
and see it as a deterministically parametrised function on $\dom$. 
We will impose a condition considered by Bachmayr et al. \cite{Bachmayr_etal_2016_ESAIM_part2} on $(\psi_j)$, see Assumption \ref{assump:B} {below}, that {is} particularly suitable for $\psi_j$ with local support.
 
To ensure the law on $\bbR^\dom$ is well defined, we suppose 
\begin{align}
\sum_{j= 1}^\infty
\scbasis_{j}(x)^2<\infty\text{ for all }x\in\dom,
\label{eq:cov finite}
\end{align}
so that the covariance function $\E[T(x_1)T(x_2)]=\sum_{j\ge1}\scbasis_{j}(x_1)\scbasis_{j}(x_2)$ ($x_1,x_2\in\dom$) is well-defined.
We consider the parametrised elliptic partial differential equation
\begin{align}
-\div(a(x,\bs{y})\nabla u(x,\bs{y}))=f(x)\text{ in }\dom,\quad
u=0\text{ on }\partial\dom,
\label{eq:param PDE}
\end{align}
where
\begin{align}
a(x,\bs{y})=a_*(x)
+
a_0(x)
\exp
\big(T(x,\bs{y})\big),
\end{align}
with continuous functions $a_*$, $a_0$ on $\bddom$. We assume $a_*$ is non-negative on $\bddom$, and $a_0$ is positive on $\bddom$.

In accordance with the above formulation, we rewrite \eqref{eq:wk formulation omega} as the parametrised variational problem: find $u\in V$ such that
\begin{align}
\mathscr{A}(\bs{y};u(\cdot,\bs{y}),v)
=\langle f, v \rangle\quad
\text{ for all }v\in V,
\label{eq:wk formulation param}
\end{align}
{To} prove well-posedness of the variational problem \eqref{eq:wk formulation param}, we use the Lax--Milgram lemma. Conditions which ensure that the bilinear form $\mathscr{A}(\bs{y};\cdot,\cdot)$ defined by the diffusion coefficient $a$ is coercive and bounded are discussed later.

Motivated by UQ applications, we are interested in expected values of bounded linear functionals of the solution of the above PDEs. That is, given a continuous linear functional $\G\in\Vdual$ we wish to compute $\E[\G(u(\cdot))]:=\int_{\bbR^\bbN}\G(u(\cdot,\bs{y}))\mathrm{d}\Prob_{Y}(\bs{y})$, where the measurability of the integrands will be discussed later. 
To compute $\E[\G(u(\cdot))]$ we use a sampling method: generate realisations of $a(x,\bs{y})$, which yields the solution $u(x,\bs{y})$ via the PDE \eqref{eq:param PDE}, and from these we compute $\E[\G(u(\cdot))]$.

In practice, these operations cannot be performed exactly, and numerical methods {need to be} employed. This paper gives an analysis of the error incurred by the method outlined as follows.  
We compute the realisations by truncation, that is, for some integer $s\ge1$ we generate $a(x,(y_1,\dotsc,y_s,0,0,0,\dots))$. 
Further, the expectation is approximated by a QMC method. 

Let $u^s(x)=u^s(x,\bs{y})$ be the solution of \eqref{eq:param PDE} with $\bs{y}=(y_1,\dotsc,y_s,0,0,0,\dotsc)$, that is, {of} the problem: find $u^s\in V$ such that 
\begin{align}
-\div(a(x,(y_1,\dotsc,y_s,0,0,\cdots))\nabla u^s(x) = f(x) \ \ \text{in } \dom,
\quad             u^s = 0  \text{ on } \partial\dom.
\end{align}
Here, even though the dependence of $u^s$ on $\bs{y}$ is only on $(y_1,\dotsc,y_s)$, we abuse the notation slightly by writing
 $u^s(x,\bs{y}):=
 u^s(y_1,\dotsc,y_s,0,0,0,\dotsc)$.

Let $\Phi_s\inv\from[0,1]^s\ni\bs{v}\mapsto\Phi_s\inv(\bs{v})\in \bbR^s$ be the inverse of the {cumulative} normal distribution function applied to each entry of $\bs{v}$. We write $F(\bs{y}):=F(y_1,\dotsc,y_s)=\G(u^s(\cdot,\bs{y}))$ and
\begin{align}
I_s(F)
&:=
\int_{{\bs{v}\in(0,1)^s}}
F(\Phi_s\inv(\bs{v}))
\mathrm{d}{\bs{v}}
=\int_{{\bs{y}\in\mathbb{R}^s}}
\G(u^s(\cdot,\bs{y}))
\prod\limits_{j=1}^{s}\phi(y_j)\mathrm{d}\bs{y}
=\E[\G(u^s)],
\end{align}
where $\phi$ is the probability density function of the standard normal random variable. The measurability of the mapping $\bbR^s\ni\bs{y}\mapsto \G(u^s(\cdot,\bs{y}))\in \bbR$ will be discussed later.

In order to approximate $I_s(F)$, we employ a QMC method called a \textit{randomly shifted lattice rule}. This is an equal-weight quadrature rule of the form
$$
\mathcal{Q}_{s,n}(\bs{\Delta};F)
:=\frac{1}n\sum_{i=1}^nF\left(\Phi\inv_s\left(\mathrm{frac}\left(\frac{i\bs{z}}{n}+\bs{\Delta}\right)\right)\right),
$$
where the function $\mathrm{frac}(\cdot)\from\bbR^s\ni\bs{y}\mapsto\mathrm{frac}(\bs{y})\in[0,1)^s$ takes the fractional part of each component in $\bs{y}$. 
Here, $\bs{z}\in\bbN^s$ is a carefully chosen point called the (deterministic) \textit{generating vector} and $\bs{\Delta}\in[0,1]^s$ is the \textit{random shift}. We assume the random shift $\bs{\Delta}$ is a $[0,1]^s$-valued uniform random variable defined on a suitable probability space different from $(\Omega,\mathscr{F},\Prob)$. For further details of the randomly shifted lattice rules, we refer to the surveys \cite{Dick.J_etal_2013_Acta,Kuo.F_Nuyens_2016_FoCM_survey} and references therein.

We want to evaluate the root-mean-square error
\begin{align}
\sqrt{
\E^{\bs{\Delta}}
\left[
\big(
\E[\G(u)] -\mathcal{Q}_{s,n}(\bs{\Delta};F)
\big)^2
\right]
}.
\end{align}
where $\E^{\bs{\Delta}}$ is the expectation with respect to the random shift. 
Note that in practice the solution $u^s$ needs to be approximated by some numerical scheme $\widetilde{u}^s$, which results in computing  $\widetilde{F}(\bs{y}):=\G(\widetilde{u}^s(\bs{y}))$. Thus, the error
$e_{s,n}:=\sqrt{
\E^{\bs{\Delta}}
\left[
\big(
\E[\G(u)] -\mathcal{Q}_{s,n}(\bs{\Delta};\widetilde{F})
\big)^2
\right]
}$ is what we need to evaluate in practice. 
Via the trivial decompositions we have, using 
$\E^{\bs{\Delta}}[\mathcal{Q}_{s,n}(\bs{\Delta};\widetilde{F})]=\E[\G(\widetilde{u})]$ (see, for example, \cite{Dick.J_etal_2013_Acta}),
\begin{align}
e_{s,n}^2
&=
(\E[\G(u)-\G(\widetilde{u}^s)])^2
+
\E^{\bs{\Delta}}
\left[
\big(
\E[\G(\widetilde{u}^s)] -\mathcal{Q}_{s,n}(\bs{\Delta};\widetilde{F})
\big)^2
\right] \\
&\le 
2(\E[\G(u)-\G(\widetilde{u})])^2
+
2(\E[\G(\widetilde{u})-\G(\widetilde{u}^s)])^2 
+\E^{\bs{\Delta}}
\left[
\big(
\E[\G(\widetilde{u}^s)] -\mathcal{Q}_{s,n}(\bs{\Delta};\widetilde{F})
\big)^2
\right],
\end{align}
where $\widetilde{u}$ is an approximation of the solution $u$ of \eqref{eq:param PDE} with the same scheme as $\widetilde{u}^s$.

 For the sake of simplicity, we forgo the discussion on the numerical approximation of the solution of the PDE. 
 Instead, we discuss the smoothness of the realisations of the random coefficient. Then, given a suitable smoothness of the boundary $\partial\dom$, the convergence rate of $\E[\G(u)-\G(\widetilde{u}^s)]$ is typically obtained from the smoothness of the realisations of the coefficients $a(\cdot,\bs{y})$, via the regularity of the solution $u$. See 
 \cite{GKNetal2014:lognormal,Kuo.F_Nuyens_2016_FoCM_survey,Kuo.F_Schwab_Sloan_2012_SINUM}. Therefore in the following, we concentrate on the truncation error and the quadrature error, the second and the third term of the above decomposition, and the realisations of $a$.

In the course of the error analyses, we assume $(\scbasis_j)$ satisfies the following assumption.
\begin{assumption}\label{assump:B}
The system $(\scbasis_j)$ satisfies the following. There exists a positive sequence $(\rho_j)$ such that 
\begin{align}
\sup_{x\in {\dom} }\sum_{j\ge1}\rho_j|\psi_j(x)|=:\kappa <\ln2,
\tag{\textbf{b1}}
\label{eq:cond psi b1}
\end{align}
and further,
\begin{align}
(1/\rho_j)\in\ell^{q}\qquad\text{ for some }{q}\in(0,1].
\tag{\textbf{b2}}\label{eq:cond 1/rho b2}
\end{align}
\end{assumption}
We also use the following weaker assumption.
\begin{assumptionprime}\label{assump:Bprime}
The same as Assumption \ref{assump:B}, only {with} the condition \eqref{eq:cond 1/rho b2} being replaced with 
\begin{align}
(1/\rho_j)\in\ell^{q}\qquad\text{ for some }q\in(0,\infty).
\tag{\textbf{b2\ensuremath{^\prime}}}
\label{eq:cond 1/rho b2prime}
\end{align}
\end{assumptionprime}
{We note that \eqref{eq:cond 1/rho b2prime}, and thus also \eqref{eq:cond 1/rho b2}, implies $\rho_j\to\infty$ as $j\to\infty$.}

Some remarks on the assumptions are in order. First note that Assumption \ref{assump:Bprime} implies $\sum_{j\ge1}|\psi(x)|<\infty$ for any $x\in\dom$, {and hence} \eqref{eq:cov finite}. Assumption \ref{assump:Bprime} is used to obtain an estimate on the mixed derivative with respect to the random parameter $y_j$, and further, ensures the almost surely well-posedness of the problem \eqref{eq:wk formulation param} --- see Corollary \ref{cor:pd uy lognormal} and Remark \ref{rem:on bbdness}. Assumption \ref{assump:B} is used to obtain a {dimension-independent} QMC error estimate --- see Theorem \ref{thm:conv rate}, and Theorem \ref{thm:wavelet QMC error}. The stronger the condition \eqref{eq:cond 1/rho b2} the system $(\psi_j)$ satisfies, {that is, the smaller is $q$}, the smoother the realisations of the random coefficient become. In Section \ref{sec:Holder smoothness}, we discuss smoothness of realisations allowed by these conditions.
\section{Bounds on mixed derivatives}
In this section, we discuss bounds on mixed derivatives. In order to motivate the discussion in this section, first we explain how the derivative bounds come into play in the QMC analysis developed in the next section.

Application of QMC methods to elliptic PDEs with log-normal random coefficients was initiated with computational results by Graham et al. \cite{Graham.I_etal_2011_JCP}, and an analysis was followed {by} Graham et al. \cite{GKNetal2014:lognormal}. Following the discussion by \cite{GKNetal2014:lognormal}, we assume the integrand $F$ is in the space called {the} \textit{weighted unanchored Sobolev space} $\mathcal{W}^s$, consisting of measurable functions $F\from \bbR^s\to\bbR$ such that
\begin{align}
\norm{F}_{\mathcal{W}^s}^2
=\!\!
\sumu
\frac1{\gamma_{\fraku}}
\int_{\mathbb{R}^{|\fraku|}}\!\!\!
\left(
\int_{\mathbb{R}^{s-|\fraku|}}
\mixedfirst{F}{y}(\bs{y}_\fraku;\bs{y}_{\{1:s\}\setminus\fraku})
\prod\limits_{j\in\{1:s\}\setminus\fraku}
\phi(y_j)\dbsy_{\{1:s\}\setminus\fraku}
\right)^2\!\!
\prod\limits_{j\in\fraku}
w_j^2(y_j)\dbsy_\fraku<\infty,
\label{eq:F norm}
\end{align}
where we assume, similarly to \cite{GKNetal2014:lognormal}, {that}
\begin{align}
w_j^2(y_j)=\exp(-2\alpha_j|y|)\label{eq:weight fct}
\end{align}
for some $\alpha_j>0$. 
Here, $\{1:s\}$ is a shorthand notation for the set $\{1,\dotsc,s\}$, $\mixedfirst{F}{y}$ denotes the mixed first derivative with respect to each of the ``active'' variables $y_j$ with $j\in\fraku\subseteq\{1:s\}$, and $\bs{y}_{\{1:s\}\setminus\fraku}$ denotes the ``inactive'' variables $y_j$ with $j\not\in\fraku$. Further, weights $({\gamma_{\fraku}})$ {describe} the relative importance of the variables $\{y_j\}_{j\in\fraku}$. Note that the measure $\int_{\cdot}\dy_{\fraku}$ and $\int_{\cdot}\frac1{\gamma_{\fraku}}\dy_{\fraku}$ differ by at most a constant factor {depending on $\fraku$}. Weights $({\gamma_{\fraku}})$ play an important role {in deriving} error estimates independently of the dimension $s$, and further, {in obtaining} the generating vector $\bs{z}$ for the lattice rule via {the {component-by-component (CBC)} algorithm.} 

Depending on the problem, different types of weights have been considered to derive error estimates. For the randomly shifted lattice rules, ``POD weights'' and ``product-weights'' {have been} considered (\cite{Dick.J_etal_2013_Acta,Kuo.F_Nuyens_2016_FoCM_survey}). When applied to the PDE parametrised with log-normal coefficients, the result in \cite{GKNetal2014:lognormal} suggests the use of POD weights for the problem.

We wish to develop a theory on the applicability of product weights, which has an advantage in terms of computational cost. The computational cost of the CBC construction is $\mathcal{O}(sn\log n+ns^2)$ in the case of POD weights, compared to $\mathcal{O}(sn\log n)$ for product weights \cite{Dick.J_etal_2014_higher_order_Galerkin}. Since we often want to approximate the random field well, {and so necessarily} we have large $s$, the applicability of product weights is of clear interest. 

Estimates {of} derivatives of the integrand $F(\bs{y})$ with respect to the parameter $\bs{y}$, that is, the variable with which $F(\bs{y})$ is integrated, are one of the keys in the error analysis of QMC. 
In \cite{GKNetal2014:lognormal}, it was the estimates being {of} ``POD-form'' that led their theory to the POD weights. 
Under an assumption on the system $(\scbasis_j)${, which is} different from {that in} \cite{GKNetal2014:lognormal}, we show that the derivative estimates turn out to be {of} ``product-form'', and further that, under a suitable assumption, we achieve the same error convergence rate {close to $1$} with product weights. 
%

Now, we derive an estimate of the product form. Let $\mathcal{F}:=\{\mu=(\mu_1,\mu_2,\dotsc)\in\mathbb{N}_0^{\mathbb{N}}\mid \text{ all but finite number of components of }\mu\text{ are zero} \}$.
For $\mu\in\mathcal{F}$ we use the notation 
$|\mu|=\sum_{j\ge1}\mu_j$, 
${\mu}!=\prod\limits_{j\ge1}\mu_j!$, 
$\rho^{\mu}=\prod\limits_{j\ge1}\rho_j^{\mu_j}$ 
{for $\rho=(\rho_j)_{j\ge1}\in \mathbb{R}^{\mathbb{N}}$}, and 
\begin{align}
\partial^\mu u= \frac{\partial^{|\mu|}}{y_{j(1)}^{\mu_{j(1)}}\dotsb y_{j{(k)}}^{\mu_{j(k)}}}u,
\end{align}
where $k=\#\{j\mid\mu_j\neq 0\}$.

We have the following bound {on} mixed derivatives {of order $r\ge1$ (although in our application we will need only $r=1$)}. 
The proof follows essentially the same argument as the proof by Bachmayr et al. \cite[Theorem 4.1]{Bachmayr_etal_2016_ESAIM_part2}. Here, we show a tighter bound by changing the condition from $\frac{\ln2}{\sqrt{r}}$ to $\frac{\ln2}{{r}}$ in 
{\cite[(91)]{Bachmayr_etal_2016_ESAIM_part2}}, and we have $\rho^{2\mu}$ {in \eqref{eq:Bachmayr etal'}} in place of $\frac{\rho^{2\mu}}{\mu!}$ in the left hand side of \cite[(92)]{Bachmayr_etal_2016_ESAIM_part2}.
\begin{prop}\label{prop:pd uy lognormal}
Let $r\ge1$ be an integer. 
Suppose $(\psi_j)$ satisfies the condition \eqref{eq:cond psi b1} 
{with $\ln 2$ replaced by $\frac{\ln 2}{r}$,}
with a positive sequence $(\rho_j)$. 
Then, there exists a constant $C_0=C_0(r)$ that depends on $\kappa$ and $r$, such that
\begin{align}
\sum_{\substack{\mu\in\mathcal{F}\\\norm{\mu}_\infty\le r}}
{\rho^{2\mu}}
\int_{\dom} a(\bs{y})|\nabla(\partial^\mu u(\bs{y}))|^2\dx
\le
C_0 
\int_{\dom} a(\bs{y})|\nabla u(\bs{y})|^2 \dx.
\label{eq:Bachmayr etal'}
\end{align}
 for all $\bs{y}$ that satisfies $\big\|\sum_{j\ge1}y_j\psi_j\big\|_{L^{\infty}(\dom)}<\infty$, 
 where $u(\bs{y})$ is the solution of \eqref{eq:wk formulation param} for such $\bs{y}$. 
The same bound holds also for $u^s(\bs{y})$, the solution of $\eqref{eq:wk formulation param}$ with $\bs{y}=(y_1,\dotsc,y_s,0,0,\dotsc)$.
\end{prop}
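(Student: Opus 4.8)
The plan is to differentiate the parametric weak formulation in $\bs y$ and run a self-improving energy estimate, following the scheme of Bachmayr et al.\ \cite[Theorem 4.1]{Bachmayr_etal_2016_ESAIM_part2} but extracting a sharper constant from a pointwise multinomial identity. Since $T(x,\bs y)=\sum_{j}y_j\psi_j(x)$ is affine in $\bs y$, one has $\partial^{\nu}a=a_0\,\psi^{\nu}e^{T}$ for every multi-index $\nu\neq0$, where $\psi^{\nu}(x):=\prod_{j}\psi_j(x)^{\nu_j}$, while $\partial^{0}a=a$; moreover $0<a_0 e^{T}=a-a_*\le a$ pointwise on $\dom$ because $a_*\ge0$. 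Under $\norm{T(\cdot,\bs y)}_{L^{\infty}(\dom)}<\infty$ the form $\mathscr{A}(\bs y;\cdot,\cdot)$ is bounded and coercive on $V$, so $u(\bs y)\in V$ is well defined; arguing as in \cite{Bachmayr_etal_2016_ESAIM_part2}, each $\partial^{\mu}u(\bs y)$ lies in $V$ and the Leibniz rule applied to $\mathscr{A}(\bs y;u,v)=\langle f,v\rangle$ gives, for $\mu\neq0$,
\[
\int_{\dom}a\,\nabla(\partial^{\mu}u)\cdot\nabla v\dx=-\sum_{0\neq\nu\le\mu}\binom{\mu}{\nu}\int_{\dom}a_0 e^{T}\psi^{\nu}\,\nabla(\partial^{\mu-\nu}u)\cdot\nabla v\dx,\qquad v\in V.
\]

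I would then introduce $g_{\mu}(x):=\rho^{\mu}\sqrt{a(x)}\,|\nabla\partial^{\mu}u(x)|$, so that $\norm{g_{\mu}}_{L^{2}(\dom)}^{2}=\rho^{2\mu}\int_{\dom}a|\nabla\partial^{\mu}u|^{2}\dx$. Testing the identity above with $v=\partial^{\mu}u$, bounding $|a_0 e^{T}|\le a$, and using $\rho^{2\mu}|\psi^{\nu}|=b^{\nu}\rho^{\mu-\nu}\rho^{\mu}$ with $b^{\nu}(x):=\prod_{j}(\rho_j|\psi_j(x)|)^{\nu_j}$, one gets the pointwise-coupled inequality $\norm{g_{\mu}}_{L^{2}(\dom)}^{2}\le\int_{\dom}g_{\mu}\sum_{0\neq\nu\le\mu}\binom{\mu}{\nu}b^{\nu}g_{\mu-\nu}\dx$. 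Summing over $0\neq\norm{\mu}_{\infty}\le r$ and adding $\norm{g_0}_{L^{2}(\dom)}^{2}$ gives, with $\Sigma:=\sum_{\norm{\mu}_{\infty}\le r}\norm{g_{\mu}}_{L^{2}(\dom)}^{2}$,
\[
\Sigma\le\int_{\dom}a|\nabla u|^{2}\dx+\int_{\dom}S(x)\dx,\qquad S(x):=\sum_{0\neq\norm{\mu}_{\infty}\le r}g_{\mu}(x)\sum_{0\neq\nu\le\mu}\binom{\mu}{\nu}b^{\nu}(x)g_{\mu-\nu}(x).
\]

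The heart of the argument is a pointwise estimate of $S(x)$. Fixing $x$, the multinomial identity $\sum_{\nu\le\mu}\binom{\mu}{\nu}b^{\nu}(x)=\prod_{j}(1+\rho_j|\psi_j(x)|)^{\mu_j}$ together with $\norm{\mu}_{\infty}\le r$ and $\sum_{j}\rho_j|\psi_j(x)|\le\kappa$ gives $\sum_{0\neq\nu\le\mu}\binom{\mu}{\nu}b^{\nu}(x)\le e^{r\kappa}-1=:\theta$, and here $\theta<1$ \emph{exactly because} $\kappa<\ln2/r$. Applying Cauchy--Schwarz first in $\nu$ and then in $\mu$, and using the companion bound $\sum_{\nu\neq0}\binom{\lambda+\nu}{\nu}b^{\nu}(x)\le\prod_{j}(1+\rho_j|\psi_j(x)|)^{r}-1\le\theta$ for the reindexed sum $\lambda=\mu-\nu$ (valid since $\binom{\lambda_j+\nu_j}{\nu_j}\le\binom{r}{\nu_j}$ whenever $\lambda_j+\nu_j\le r$), one obtains $S(x)\le\theta\sum_{\norm{\mu}_{\infty}\le r}g_{\mu}(x)^{2}$. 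Integrating yields $\Sigma\le\int_{\dom}a|\nabla u|^{2}\dx+\theta\Sigma$, hence $\Sigma\le(1-\theta)^{-1}\int_{\dom}a|\nabla u|^{2}\dx$, i.e.\ \eqref{eq:Bachmayr etal'} with $C_0(r)=(2-e^{r\kappa})^{-1}$, a constant depending only on $\kappa$ and $r$.

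Two loose ends remain. First, the absorption $\theta\Sigma$ presupposes $\Sigma<\infty$, while $\{\mu:\norm{\mu}_{\infty}\le r\}$ is infinite; I would dispatch this by running the whole estimate with all sums further restricted to $|\mu|\le k$ --- the recursion is closed under this restriction since $|\mu-\nu|<|\mu|$ --- so that each truncated sum is finite and the bound is uniform in $k$, then let $k\to\infty$ by monotone convergence. Second, the statement for $u^{s}(\bs y)$ follows verbatim, since $u^{s}(\cdot,\bs y)$ is $u(\cdot)$ evaluated at $(y_1,\dots,y_s,0,0,\dots)$, a parameter for which $\norm{T(\cdot,\bs y)}_{L^{\infty}(\dom)}<\infty$ automatically. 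I expect the combinatorial bookkeeping in the $S(x)\le\theta\sum g_{\mu}^{2}$ step to be the main obstacle: one has to coordinate the reindexing $\mu\mapsto(\lambda,\nu)$ under the constraint $\norm{\lambda+\nu}_{\infty}\le r$, the binomial weights, and the bilinear cross terms $g_{\mu-\nu}g_{\mu}$ so that precisely $\theta=e^{r\kappa}-1$ reappears. It is exactly this route through the multinomial identity --- instead of a cruder $\ell^{2}$ bound on $\sum_{\nu}\binom{\mu}{\nu}b^{\nu}$ --- that yields the condition $\kappa<\ln2/r$ rather than $\kappa<\ln2/\sqrt r$, and $\rho^{2\mu}$ rather than $\rho^{2\mu}/\mu!$ on the left of \eqref{eq:Bachmayr etal'}.
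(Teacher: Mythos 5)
Your argument is correct, and it closes the estimate differently from the paper. Both proofs rest on the same two ingredients: the differentiated weak formulation (the Leibniz identity you state, which is \cite[(74)]{Bachmayr_etal_2016_ESAIM_part2}) and the multinomial bound $\sum_{0\neq\nu\le\mu}\binom{\mu}{\nu}\prod_j(\rho_j|\psi_j(x)|)^{\nu_j}\le e^{r\kappa}-1$, which is exactly where $\kappa<\ln 2/r$ and the clean $\rho^{2\mu}$ weight enter in the paper as well. The difference is structural: the paper groups multi-indices by total order, sets $\sigma_k=\sum_{|\mu|=k,\,\|\mu\|_\infty\le r}\rho^{2\mu}\|\partial^\mu u\|^2_{a(\bs{y})}$, derives the recursion $\sigma_k\le\sum_{\ell<k}\frac{(r\kappa)^{k-\ell}}{(k-\ell)!}\sigma_\ell$ (retaining the factorial decay in $k-\ell$), and proves $\sigma_k\le\sigma_0\delta^k$ by induction, ending with $C_0=\sum_k\delta^k$ for any $\delta\in(r\kappa/\ln 2,1)$; you instead lump all $\mu$ together and absorb via $\Sigma\le\sigma_0+\theta\Sigma$ with $\theta=e^{r\kappa}-1<1$, which is shorter and yields the explicit constant $C_0=(2-e^{r\kappa})^{-1}$ (comparable to, and for small $\kappa$ slightly smaller than, the paper's). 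Your companion bound $\sum_{\nu\neq 0}\binom{\lambda+\nu}{\nu}b^{\nu}\le\prod_j(1+\rho_j|\psi_j|)^r-1\le\theta$ for the reindexed sum is correct, so the pointwise inequality $S(x)\le\theta\sum_{\mu}g_\mu(x)^2$ does hold. One small repair to your finiteness fix: restricting to $|\mu|\le k$ still leaves infinitely many multi-indices (there are infinitely many coordinates $j$), so you should instead run the absorption over an arbitrary \emph{finite} downward-closed subset of $\{\mu:\|\mu\|_\infty\le r\}$ --- the bound is uniform over such subsets, and a supremum gives the claim. This is a genuine but minor point, and the paper's own proof carries the analogous implicit assumption when it divides by $\sigma_k^{1/2}$.
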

\begin{proof}
Let 
\begin{align*}
\Lambda_{k}:=
&\{\mu\in\mathcal{F}\mid|\mu|=k\text{ and }\norm{\mu}_{\ell_\infty}\le r\},\ \, \text{and}\ \,
S_{\mu}:=
\{\nu\in\mathcal{F}\mid\nu\le \mu\text{ and }\nu\neq\mu\}\text{ for }\mu\in\mathcal{F},
\end{align*}
{with $\le$ denoting the component-wise partial order between multi-indices.}
Let us introduce the notation $
\norm{v}_{a(\bs{y})}^2:=\int_{\dom} a(\bs{y})|\nabla v|^2 \dx
$ for all $v\in V$, and let $$\sigma_k:=\sum_{\mu\in \Lambda_k}\rho^{2\mu}
\norm{\partial^\mu u(\bs{y})}_{a(\bs{y})}^2.$$
 We show below that we can choose $\delta=\delta(r)<1$ such that
\begin{align}
\sigma_k\le \sigma_0\delta^k\qquad\text{ for all }k\ge0.
\end{align}
Note that if this holds then we have
\begin{align}
\sum_{\norm{\mu}_\infty\le r}{\rho^{2\mu}}\norm{\partial^\mu u(\bs{y})}_{a(\bs{y})}^2
=
\sum_{k=0}^\infty \sum_{\mu\in\Lambda_k}\rho^{2\mu}\norm{\partial^\mu u(\bs{y})}_{a(\bs{y})}^2=
\sum_{k=0}^\infty \sigma_k\le \sigma_0\sum_{k=0}^\infty \delta^k<\infty,
\end{align}
and the statement will follow with $C_0=C_0(r)=\sum_{k=0}^\infty \delta(r)^k$.

We now show $\sigma_k\le \sigma_0\delta^k$. Note that from the assumption  $\|\sum_{j\ge1}y_j\psi_j\|_{L^{\infty}(\dom)}<\infty$, in view of \cite[Lemma 3.2]{Bachmayr_etal_2016_ESAIM_part2} we have $\partial^\mu u\in V$ for any $\mu\in\mathcal{F}$. Thus, by taking $v:=\partial^\mu u$ ($\mu\in \Lambda_k$) in \cite[(74)]{Bachmayr_etal_2016_ESAIM_part2}, we have
\begin{align}
{\sigma_k}=&\sum_{\mu\in \Lambda_k}\rho^{2\mu}\!\!\!
\int_D\!\!
a(\bs{y})|\nabla\partial^\mu u(\bs{y})|^2\dx\notag\\
\le&
\sum_{\mu\in \Lambda_k}
\sum_{\nu\in S_\mu}
\Bigg(\prod\limits_{j\ge 1}
\frac{\mu_j!\rho_j^{\mu_j-\nu_j}\rho^{\mu_j}\rho^{\nu_j}}
{\nu_j!(\mu_j-\nu_j)!}
\Bigg)
\int_D
a(\bs{y})
\Bigg(
\prod\limits_{j\ge 1}|\psi_j|^{\mu_j-\nu_j}
\Bigg)
|\nabla\partial^\nu u(\bs{y})|
|\nabla\partial^\mu u(\bs{y})|
\dx.
\end{align}
Using the notation
\begin{align}
\epsilon(\mu,\nu)(x)
:=
\epsilon(\mu,\nu)
:=
\frac{\mu!}{\nu!}\frac{\rho^{\mu-\nu}|\psi|^{\mu-\nu}}{(\mu-\nu)!},
\end{align}
and the Cauchy--Schwarz inequality for the sum over $S_\mu$, it follows that
\begin{align}
{\sigma_k}&
\le
\int_D
\sum_{\mu\in \Lambda_k}
\sum_{\nu\in S_\mu}
\epsilon(\mu,\nu)
a(\bs{y})
|\rho^\nu\nabla\partial^\nu u(\bs{y})|
|\rho^\mu\nabla\partial^\mu u(\bs{y})|
\dx\\
&\le
\int_D
\sum_{\mu\in \Lambda_k}
\left(
\sum_{\nu\in S_\mu}
\epsilon(\mu,\nu)
a(\bs{y})
|\rho^\nu\nabla\partial^\nu u(\bs{y})|^2
\right)^\frac{1}{2}
\left(
\sum_{\nu\in S_\mu}
\epsilon(\mu,\nu)
a(\bs{y})
|\rho^\mu\nabla\partial^\mu u(\bs{y})|^2
\right)^\frac{1}{2}
\dx.\label{eq:C--S sum}
\end{align}
Let $$S_{\mu,\ell}:=\{\nu\in S_\mu\mid |\mu-\nu|=\ell\}.$$ 
Then, for $\mu\in\Lambda_k$ we have 
$$S_{\mu}=\{\nu\in\mathcal{F}\mid\nu\le \mu,\ \nu\neq\mu\}=\bigcup\limits_{\ell=1}^{|\mu|}\{\nu\in\mathcal{F}\mid\nu\le \mu,\ |\mu-\nu|=\ell \}
=
\bigcup\limits_{\ell=1}^{|\mu|}S_{\mu,\ell}
,$$
{and further, from} $|\mu|=k$, we have
\begin{align}
\sum_{\nu\in S_\mu}
\epsilon(\mu,\nu)
=
\sum_{\ell=1}^{k}
\sum_{\nu\in S_{\mu,\ell}}
\epsilon(\mu,\nu)
=
\sum_{\ell=1}^{k}
\sum_{\nu\in S_{\mu,\ell}}
\frac{\mu!}{\nu!}\frac{\rho^{\mu-\nu}|\psi|^{\mu-\nu}}{(\mu-\nu)!}.
\label{eq:sum ep mu nu}
\end{align}
Since $\nu\in S_{\mu,\ell}$ implies $\sum_{j\in\supp\mu}(\mu_j-\nu_j)=\ell$, there are $\ell$ factors in $\frac{\mu!}{\nu!}=\prod_{j\in\supp\mu}\mu_j(\mu_j-1)\dotsb(\nu_j+1)$.
From $\mu_j\le r$ ($j\in\supp\mu$), each of the factors is at most $r$. Thus,
$$
\frac{\mu!}{\nu!}
\le r^\ell\quad\text{ for }\mu\in \Lambda_k,\ \nu\in S_{\mu,\ell}.
$$
Therefore, from the multinomial theorem, for each $x\in\dom$ it follows from 
\eqref{eq:sum ep mu nu} that
\begin{align}
\sum_{\nu\in S_\mu}
\epsilon(\mu,\nu)
\le&
\sum_{\ell=1}^{k}
r^\ell
\sum_{\nu\in S_{\mu,\ell}}
\frac{\rho^{\mu-\nu}|\psi|^{\mu-\nu}}{(\mu-\nu)!}
\le
\sum_{\ell=1}^{k}
r^\ell
\sum_{|\tau|=\ell}
\frac{\rho^{\tau}|\psi|^{\tau}}{\tau!}
=
\sum_{\ell=1}^{k}
r^\ell
\frac{1}{\ell!}
\sum_{|\tau|=\ell}
\frac{\ell!}{\tau!}
\rho^{\tau}|\psi|^{\tau}\\
=&
\sum_{\ell=1}^{k}
r^\ell
\frac{1}{\ell!}
(\sum_{j=1}^\infty \rho_j|\psi_j|)^\ell
\le
\sum_{\ell=1}^{k}
r^\ell
\frac{1}{\ell!}
\kappa^\ell
\le 
e^{r\kappa}-1\le e^{{\ln 2}}-1=1.
\end{align}
Inserting into \eqref{eq:C--S sum}, we have
\begin{align}
\sum_{\mu\in \Lambda_k}\!\!\rho^{2\mu}
\norm{\partial^\mu u(\bs{y})}_{a(\bs{y})}^2
\le
\int_D
\sum_{\mu\in \Lambda_k}
\left(
\sum_{\nu\in S_\mu}
\epsilon(\mu,\nu)
a(\bs{y})
|\rho^\nu\nabla\partial^\nu u(\bs{y})|^2
\right)^\frac{1}{2}
\left(
a(\bs{y})
|\rho^\mu\nabla\partial^\mu u(\bs{y})|^2
\right)^\frac{1}{2}
\dx.
\end{align}
Again {applying} the Cauchy--Schwarz inequality to the summation over $\Lambda_k$ and then to the integral, we have
\begin{align*}
\begin{split}
\sigma_k
&\le
\int_D
\left(
\sum_{\mu\in \Lambda_k}
\sum_{\nu\in S_\mu}
\epsilon(\mu,\nu)
a(\bs{y})
|\rho^\nu\nabla\partial^\nu u(\bs{y})|^2
\right)^\frac{1}{2}\!\!
\left(
\sum_{\mu\in \Lambda_k}
a(\bs{y})
|\rho^\mu\nabla\partial^\mu u(\bs{y})|^2
\right)^\frac{1}{2}
\dx\notag
\end{split}
\\
&\le 
\left(\int_D
\sum_{\mu\in \Lambda_k}
\sum_{\nu\in S_\mu}
\epsilon(\mu,\nu)
a(\bs{y})
|\rho^\nu\nabla\partial^\nu u(\bs{y})|^2
\dx
\right)^\frac{1}{2}
\sigma_k^{\frac12},
\end{align*}
{and hence}
\begin{align}
{\sigma_k\le
\int_D
\sum_{\mu\in \Lambda_k}
\sum_{\nu\in S_\mu}
\epsilon(\mu,\nu)
a(\bs{y})
|\rho^\nu\nabla\partial^\nu u(\bs{y})|^2
\dx.}
\label{eq:C--S sum 2}
\end{align}
Now, for any $k\ge1$ and any $\nu\in \Lambda_\ell=\{\nu\in\mathcal{F}\mid |\nu|=\ell,\ \norm{\nu}_\infty\le r\}$ with $\ell\le k-1$, let
$$
R_{\nu,\ell,k}:=
\{\mu\in\Lambda_k\mid \nu\in S_\mu \}
=
\{\mu\in\mathcal{F}\mid |\mu|=k,\ \norm{\mu}_\infty\le r,\ \mu\ge \nu,\ \mu\neq\nu \}.
$$
Then, for fixed $k\ge1$  we can write 
\begin{align}
\bigcup_{\mu\in\Lambda_k}
\bigcup_{\nu\in S_\mu}
(\mu,\nu)
=&
\bigcup_{\ell=0}^{k-1}
\bigcup_{\nu\in \Lambda_\ell}
\bigcup_{\mu\in R_{\nu,\ell,k}}
(\mu,\nu).
\end{align}
Thus, we have
\begin{align}
\sum_{\mu\in \Lambda_k}
\sum_{\nu\in S_\mu}
\epsilon(\mu,\nu)
a(\bs{y})
|\rho^\nu\nabla\partial^\nu u(\bs{y})|^2
=
\sum_{\ell=0}^{k-1}
\sum_{\nu\in \Lambda_\ell}
a(\bs{y})|\rho^\nu\nabla\partial^\nu u(\bs{y})|^2
\sum_{\mu\in R_{\nu,\ell,k}}
\epsilon(\mu,\nu).
\label{eq:change index}
\end{align}
Now, note that 
$k-\ell={\sum_{j\in\supp \mu}\mu_j -\sum_{j\in\supp \mu}\nu_j=} |\mu-\nu|$.
 Thus, we have $\frac{\mu!}{\nu!}\le r^{k-\ell}$. It follows that
\begin{align}
\sum_{\mu\in R_{\nu,\ell,k}}
\epsilon(\mu,\nu)
=&
\sum_{ \nu\in R_{\nu,\ell,k} }
\frac{\mu!}{\nu!}\frac{\rho^{\mu-\nu}|\psi|^{\mu-\nu}}{(\mu-\nu)!}
\le
r^{k-\ell}
\sum_{ \nu\in R_{\nu,\ell,k} }
\frac{\rho^{\mu-\nu}|\psi|^{\mu-\nu}}{(\mu-\nu)!} \\
&\le 
r^{k-\ell}
\sum_{|\tau|=k-\ell}
\frac{\rho^{\tau}|\psi|^{\tau}}{\tau!}
\le
r^{k-\ell}\frac1{(k-\ell)!}\kappa^{k-\ell}.
\label{eq:R bd}
\end{align}
Then, substituting \eqref{eq:R bd} into 
{\eqref{eq:change index} we obtain from 
\eqref{eq:C--S sum 2}}
%
\begin{align}
\sigma_k
\le 
\sum_{\ell=0}^{k-1}
\frac1{(k-\ell)!}
(r\kappa)^{k-\ell}
\sigma_\ell.
\end{align}
From the assumption we have $\kappa<{\frac{\ln 2}{r}}$. Thus, we can take $\delta=\delta(r)<1$ such that
$\kappa<\delta {\frac{\ln 2}r}.$

We show $\sigma_k\le \sigma_0\delta^k$ for all $k\ge 0$ by induction. This is clearly true for $k=0$. Suppose $\sigma_{\ell}\le \sigma_0\delta^{\ell}$ holds for $\ell=0,\dotsc,k-1$. Then, for $\ell=k$ we have
\begin{align}
\sigma_k
&\le
\sum_{\ell=0}^{k-1}
\frac1{(k-\ell)!}
(r\kappa)^{k-\ell}
\sigma_\ell
\le
\sum_{\ell=0}^{k-1}
\frac1{(k-\ell)!}
(r\kappa)^{k-\ell}
\sigma_0\delta^\ell
\le
\sum_{\ell=0}^{k-1}
\frac1{(k-\ell)!}
(\delta {\ln 2})^{k-\ell}
\sigma_0\delta^\ell\\
&=
\sigma_0\delta^k
\sum_{\ell=0}^{k-1}
\frac1{(k-\ell)!}
({\ln 2})^{k-\ell}
\le\sigma_0\delta^k(e^{{\ln 2}}-1)=\sigma_0\delta^k,
\end{align}
which completes the proof.
\end{proof}
With the notation
\begin{align}
\amin(\bs{y}):=\essinf_{x\in\dom} a(x,\bs{y}),\quad\text{ and }
\amax(\bs{y}):=\esssup_{x\in\dom} a(x,\bs{y}),
\end{align}
we have the following corollary, 
{where here and from now on we set $r=1$.}
\begin{cor}\label{cor:pd uy lognormal}
Suppose $(\psi_j)$ satisfies Assumption \ref{assump:Bprime} with a positive sequence $(\rho_j)$. 
Then, {for} $C_0=C_0(1)$ {as in Proposition \ref{prop:pd uy lognormal}} 
for any $\fraku\subset\bbN$ of finite cardinality we have 
\begin{align}
\norm{\mixedfirst{u(\bs{y})}{\bs{y}}}_{V}
\le 
\sqrt{C_0}
\frac{\norm{f}_{\Vdual}}{{\amin(\bs{y})}}
\prod_{j\in\fraku}\frac{1}{\rho_j}<\infty,
\quad\text{almost surely,}\label{eq:bd uy}
\end{align}
where $\norm{\cdot}_{\Vdual}$ is the norm in the dual space $\Vdual$. 
The same bound holds also for $\norm{\mixedfirst{u^s}{\bs{y}}}_{V}$, with $\bs{y}=(y_1,\dotsc,y_s,0,0,\dotsc)$.
\end{cor}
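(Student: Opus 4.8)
The plan is to read \eqref{eq:bd uy} directly off Proposition \ref{prop:pd uy lognormal} specialised to $r=1$, combined with the coercivity of $\mathscr{A}(\bs{y};\cdot,\cdot)$. First observe that Assumption \ref{assump:Bprime} contains \eqref{eq:cond psi b1}, which is exactly the hypothesis of Proposition \ref{prop:pd uy lognormal} in the case $r=1$ (there the requirement reads $\kappa<\tfrac{\ln 2}{r}=\ln 2$); hence, for every $\bs{y}$ with $\big\|\sum_{j\ge1}y_j\psi_j\big\|_{L^\infty(\dom)}<\infty$, the bound \eqref{eq:Bachmayr etal'} holds with $C_0=C_0(1)$. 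Now fix a finite $\fraku\subset\bbN$ and let $\mu\in\mathcal{F}$ be the multi-index with $\mu_j=1$ for $j\in\fraku$ and $\mu_j=0$ otherwise; then $\norm{\mu}_\infty\le 1$, $\partial^\mu u(\bs{y})=\mixedfirst{u(\bs{y})}{\bs{y}}$, and $\rho^{2\mu}=\prod_{j\in\fraku}\rho_j^2$. Retaining only this single term on the left-hand side of \eqref{eq:Bachmayr etal'} yields $\prod_{j\in\fraku}\rho_j^2\,\norm{\partial^\mu u(\bs{y})}_{a(\bs{y})}^2\le C_0\,\norm{u(\bs{y})}_{a(\bs{y})}^2$, where $\norm{v}_{a(\bs{y})}^2:=\int_\dom a(\bs{y})|\nabla v|^2\dx$ as in the proof of that proposition.

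Next I would convert the $\norm{\cdot}_{a(\bs{y})}$-quantities into $V$-norms. On the left, $a(x,\bs{y})\ge\amin(\bs{y})$ for a.e.\ $x$ gives $\norm{\partial^\mu u(\bs{y})}_{a(\bs{y})}^2\ge\amin(\bs{y})\,\norm{\partial^\mu u(\bs{y})}_V^2$. On the right, testing \eqref{eq:wk formulation param} with $v=u(\cdot,\bs{y})$ gives the identity $\norm{u(\bs{y})}_{a(\bs{y})}^2=\mathscr{A}(\bs{y};u(\bs{y}),u(\bs{y}))=\langle f,u(\bs{y})\rangle\le\norm{f}_{\Vdual}\norm{u(\bs{y})}_V$, while the same identity together with coercivity $\amin(\bs{y})\norm{u(\bs{y})}_V^2\le\norm{u(\bs{y})}_{a(\bs{y})}^2$ yields $\norm{u(\bs{y})}_V\le\norm{f}_{\Vdual}/\amin(\bs{y})$, hence $\norm{u(\bs{y})}_{a(\bs{y})}^2\le\norm{f}_{\Vdual}^2/\amin(\bs{y})$. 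Substituting both estimates, dividing by $\amin(\bs{y})\prod_{j\in\fraku}\rho_j^2$, and taking square roots produces exactly the bound in \eqref{eq:bd uy}. The argument for $u^s$ is verbatim the same, since Proposition \ref{prop:pd uy lognormal} applies equally to $u^s(\bs{y})$ with $\bs{y}=(y_1,\dotsc,y_s,0,0,\dotsc)$ (and if $\fraku\not\subseteq\{1:s\}$ the derivative is zero and the bound is trivial).

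It remains to justify the qualifier ``almost surely'' and the finiteness of the right-hand side. Both reduce to $\amin(\bs{y})>0$ for $\Prob_Y$-a.e.\ $\bs{y}$, which in turn follows once $\big\|\sum_{j\ge1}y_j\psi_j\big\|_{L^\infty(\dom)}<\infty$ a.s.\ --- this last statement is also precisely what verifies, almost surely, the standing hypothesis of Proposition \ref{prop:pd uy lognormal}. Here Assumption \ref{assump:Bprime} is used beyond \eqref{eq:cond psi b1}: from \eqref{eq:cond psi b1} one gets $\norm{\psi_j}_{L^\infty(\dom)}\le\kappa/\rho_j$, and together with \eqref{eq:cond 1/rho b2prime} and standard almost-sure convergence results for Gaussian series (cf.\ Remark \ref{rem:on bbdness}) this gives $\big\|\sum_{j\ge1}y_j\psi_j\big\|_{L^\infty(\dom)}<\infty$ a.s.; since $a_*\ge 0$ and $a_0$ is continuous and strictly positive on the compact set $\bddom$, it follows that $\amin(\bs{y})\ge(\min_{\bddom}a_0)\exp\!\big(-\big\|\sum_{j\ge1}y_j\psi_j\big\|_{L^\infty(\dom)}\big)>0$ a.s. The only genuinely delicate point in the whole argument is this a.s.\ statement --- upgrading the pointwise-in-$x$ convergence guaranteed by \eqref{eq:cov finite} to a uniform-in-$x$ bound --- whereas the derivation of the explicit constant in \eqref{eq:bd uy} is a routine specialisation of Proposition \ref{prop:pd uy lognormal}.
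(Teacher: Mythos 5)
Your proposal is correct and follows essentially the same route as the paper: specialise Proposition \ref{prop:pd uy lognormal} to $r=1$, retain the single multi-index $\mu=\mathbbm{1}_{\fraku}$ (with its factor $\rho^{2\mu}=\prod_{j\in\fraku}\rho_j^2$), bound $a\ge\amin(\bs{y})$ on the left and $\norm{u(\bs{y})}_{a(\bs{y})}^2\le\norm{f}_{\Vdual}^2/\amin(\bs{y})$ on the right via the Lax--Milgram estimate, and settle the almost-sure finiteness through $\big\|\sum_{j\ge1}y_j\psi_j\big\|_{L^{\infty}(\dom)}<\infty$ a.s., which the paper obtains from \eqref{eq:cond 1/rho b2prime} via \cite[Remark 2.2]{Bachmayr_etal_2016_ESAIM_part2}. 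The only cosmetic differences are that you spell out the lower bound $\amin(\bs{y})\ge(\min_{\bddom}a_0)\exp(-\|\sum_j y_j\psi_j\|_{L^{\infty}})$ and the trivial case $\fraku\not\subseteq\{1:s\}$ for $u^s$ explicitly.
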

\begin{proof}
First, if $\bs{y}\in\bbR^\bbN$ satisfies 
$\|\sum_{j\ge1}y_j\psi_j\|_{L^{\infty}(\dom)}<\infty$, then 
we have $\frac{1}{(\amin(\bs{y}))}<\infty$:
\begin{align}
{\amin(\bs{y})}
&\ge
\big(\inf_{x\in\dom}a_0(x)\big)
\exp\Big(-\esssup_{x\in\dom}\Big|\sum_{j\ge1}y_j\psi_j(x)\Big|\Big)
,
\end{align}
and thus 
\begin{align}
\frac1{\amin(\bs{y})}
\le \frac1{\big(\inf_{x\in\dom}a_0(x)\big)}
\exp\Big(\esssup_{x\in\dom}\big|\sum_{j\ge1}y_j\psi_j(x)\big|\Big).\label{eq:bd aminy 1}
\end{align}
Now, from 
$(1/\rho_j)\in\ell^{q}$ for some $q\in(0,\infty)$, in view of \cite[Remark 2.2]{Bachmayr_etal_2016_ESAIM_part2} we have 
$$
\E
\bigg[
\exp\bigg(
k
\bigg\|
\sum_{j\ge1}y_j\psi_j
\bigg\|_{L^{\infty}(\dom)}
\bigg)
\bigg]<\infty,$$ for any $0\le k{<} \infty$. 
Thus, {$\|\sum_{j\ge1}y_j\psi_j\|_{L^{\infty}(\dom)}<\infty$, and} the right hand side of \eqref{eq:bd uy} is bounded with full (Gaussian) measure. 
We remark that the 
${\mathcal{B}(\bbR^{\bbN})}/\mathcal{B}(\bbR)$-measurability of the mapping 
$\bs{y}\mapsto \big\| \sum_{j\ge1}y_j\psi_j \big\|_{L^{\infty}(\dom)}$ 
is not an issue.
See \cite[Remark 2.2]{Bachmayr_etal_2016_ESAIM_part2} noting the continuity of norms, together with, for example, \cite[Appendix to IV. 5]{Reed.M_Simon_1980_book}. 

Now, {noting} that the standard argument regarding the continuous {dependence} of the solution of the variational problem \eqref{eq:wk formulation param} on $f$, we have $\int_{\dom} a(\bs{y})|\nabla(u(\bs{y}))|^2\dx\le \frac{\norm{f}_{\Vdual}^2}{\amin(\bs{y})}$. Then the claim follows from Proposition \ref{prop:pd uy lognormal}, noting that for any $\fraku\subset\bbN$ of finite cardinality we have
\begin{align}
\amin(\bs{y})\int_{\dom} 
\Big|\nabla\Big(\mixedfirst{u}{\bs{y}}\Big)\Big|^2
\dx
\le 
\sum_{\substack{\mu\in\mathcal{F}\\\norm{\mu}_\infty\le 1}}
{\rho^{2\mu}}
\int_{\dom} a(\bs{y})|\nabla(\partial^\mu u(\bs{y}))|^2\dx.
\end{align}
\end{proof}
\begin{rem}\label{rem:on bbdness}
We note that following a similar discussion {to the} above, $\amax(\bs{y})$ can be bounded almost surely. Thus, under the Assumption \ref{assump:Bprime}, the well-posedness of the problem \eqref{eq:wk formulation param} readily follows almost surely. 
{Further,  Assumption \ref{assump:Bprime} implies the measurability of the mapping $\bbR^s\ni\bs{y}\mapsto \G(u^s(\cdot,\bs{y}))\in \bbR$. 
See \cite[Corollary 2.1, Remark 2.2]{Bachmayr_etal_2016_ESAIM_part2} noting $\G\in\Vdual$, together with the fact that a strongly $\scrF$-measurable $V$-valued mapping is weakly $\scrF$-measurable. For more details on the measurability of vector-valued functions, see for example, \cite{Reed.M_Simon_1980_book,Yosida.K_book_1995reprint}.}
\bksq
\end{rem}
\section{QMC integration error with product weights}\label{sec:QMC error}
Based on the bound on mixed derivatives obtained in the previous section, now we derive a QMC convergence rate with product weights.

We first introduce some notations. 
Let
\begin{align}
\varsigma_j(\lambda)
:=
2
\left(
\frac{ \sqrt{2\pi}\exp(\alpha_j^2/\Lambda^{*}) }{ \pi^{2-2\Lambda^{*}}(1-\Lambda^{*})\Lambda^{*} }
\right)^\lambda
\zeta\left(\lambda+\frac12\right),\label{eq:varsigmaj}
\end{align}
where $\Lambda^{*}:=\frac{2\lambda-1}{4\lambda}$, and $\zeta(x):=\sum_{k=1}^\infty k^{-x}$ denotes the Riemann zeta function. 

We record the following result from Graham et al. \cite{GKNetal2014:lognormal}. 
\begin{theorem}\label{thm:Graham etal thm15}
\textup{(\cite[Theorem 15]{GKNetal2014:lognormal})}
Let $F\in \mathcal{W}^s$. Given $s$, $n\in\mathbb{N}$ with $2\le n\le 10^{30}$, weights $\bs{\gamma}=(\gamma_{\fraku})_{\fraku\subset\mathbb{N}}$, and the standard normal density function $\phi$, a randomly shifted lattice rule with $n$ points in $s$ dimensions can be constructed by a component-by-component algorithm such that, for all $\lambda\in(1/2,1]$,
\begin{align}
\sqrt{
\E^{\bs{\Delta}}
\big|
I_s(F)
-
\mathcal{Q}_{s,n}(\bs{\Delta};F)
\big|^2
}
\le
{9}
\Bigg(
\sum_{\emptyset\neq\fraku\subseteq\{1:s\}}
\gamma_{\fraku}^\lambda
\prod\limits_{j\in\fraku}\varsigma_j(\lambda)
\Bigg)^{\frac1{2\lambda}}
n^{-\frac{1}{2\lambda}}
\norm{F}_{\mathcal{W}^s}.
\end{align}
\bksq
\end{theorem}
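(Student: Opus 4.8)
This statement is quoted from \cite[Theorem 15]{GKNetal2014:lognormal}, so no independent proof is offered in the present paper; were one to reprove it, the plan would be to run the now-standard randomly shifted lattice rule argument in a weighted reproducing kernel Hilbert space. First, after the substitution $\bs v\mapsto\Phi_s\inv(\bs v)$ mapping $(0,1)^s$ onto $\bbR^s$, the weighted unanchored Sobolev space $\mathcal W^s$ of \eqref{eq:F norm} is a reproducing kernel Hilbert space whose kernel is a product, over the coordinates $j\in\{1:s\}$ with the $\fraku$-weighting $\bs\gamma$, of one-dimensional kernels assembled from the weight functions $w_j$ of \eqref{eq:weight fct} and the density $\phi$. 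For such a space the classical identity for randomly shifted rules gives
\[
\E^{\bs\Delta}\big|I_s(F)-\mathcal Q_{s,n}(\bs\Delta;F)\big|^2\le\big(e^{\mathrm{sh}}_{s,n}(\bs z)\big)^2\norm{F}_{\mathcal W^s}^2,
\]
where the shift-averaged worst-case error has the explicit representation
\[
\big(e^{\mathrm{sh}}_{s,n}(\bs z)\big)^2=\sum_{\emptyset\neq\fraku\subseteq\{1:s\}}\gamma_{\fraku}\,\frac1n\sum_{i=1}^{n}\prod_{j\in\fraku}\Theta_j\!\Big(\Big\{\tfrac{iz_j}{n}\Big\}\Big),
\]
with $\Theta_j$ the periodic function whose Fourier coefficients are those of the $j$-th one-dimensional kernel.

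Next I would build $\bs z$ by the component-by-component algorithm, at step $j$ choosing $z_j\in\{1,\dots,n-1\}$ so that the partial quantity does not exceed its average over the admissible $z_j$. Raising that quantity to the power $\lambda\in(1/2,1]$, using subadditivity $(a+b)^\lambda\le a^\lambda+b^\lambda$ together with Jensen's inequality, and averaging over the $z_j$ coprime to $n$ turns each lattice sum into a sum over nonzero integers of Fourier coefficients; the resulting series converges precisely when $\lambda>1/2$ and contributes the factor $\zeta(\lambda+\tfrac12)$, while the number $\varphi(n)$ of admissible $z_j$ supplies the $1/\varphi(n)$. This bootstrap yields, for the constructed $\bs z$ and simultaneously for all $\lambda\in(1/2,1]$,
\[
\big(e^{\mathrm{sh}}_{s,n}(\bs z)\big)^2\le\left(\frac1{\varphi(n)}\sum_{\emptyset\neq\fraku\subseteq\{1:s\}}\gamma_{\fraku}^{\lambda}\prod_{j\in\fraku}\varsigma_j(\lambda)\right)^{1/\lambda}.
\]

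The quantity $\varsigma_j(\lambda)$ of \eqref{eq:varsigmaj} is then the value of the one-dimensional estimate feeding this product: one bounds the integral pairing the Gaussian density $\phi$ against $1/w_j^2(y)=\exp(2\alpha_j|y|)$, and completing the square with the free parameter $\Lambda^{*}=\tfrac{2\lambda-1}{4\lambda}$ produces the Gaussian moment factor $\exp(\alpha_j^2/\Lambda^{*})$, the powers of $\pi$ and the $\zeta(\lambda+\tfrac12)$ coming from the Fourier coefficients of the Bernoulli-type kernel. Finally I would take square roots, $\sqrt{\E^{\bs\Delta}|\cdot|^2}\le e^{\mathrm{sh}}_{s,n}(\bs z)\norm{F}_{\mathcal W^s}$, and use the elementary lower bound $\varphi(n)\ge n/9$ valid for $2\le n\le 10^{30}$, so that $\varphi(n)^{-1/(2\lambda)}\le 9^{1/(2\lambda)}n^{-1/(2\lambda)}\le 9\,n^{-1/(2\lambda)}$ since $1/(2\lambda)\le 1$; this produces the constant $9$ in the statement.

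The only genuinely delicate point is the one-dimensional estimate behind $\varsigma_j(\lambda)$: it must be \emph{finite}, which is exactly where $\lambda>1/2$ (for the $\zeta$-sum) and $\alpha_j>0$ (so that the Gaussian overwhelms $\exp(2\alpha_j|y|)$ after completing the square) are both needed, and it must be sharp enough in its dependence on $\alpha_j$ that the product weights chosen afterwards from $\varsigma_j(\lambda)$ remain summable; optimising the auxiliary parameter $\Lambda^{*}$ is precisely what makes this work. Everything else is bookkeeping inside the established lattice-rule machinery and the CBC error analysis of \cite{GKNetal2014:lognormal}.
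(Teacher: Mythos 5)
You are right that the paper offers no proof of this result: it is recorded verbatim from \cite[Theorem 15]{GKNetal2014:lognormal} and used as a black box, so the citation is the "proof." Your sketch of the underlying argument (shift-averaged worst-case error in the weighted RKHS, the CBC averaging with the Jensen/$\lambda$ trick yielding $\zeta(\lambda+\tfrac12)$ and the one-dimensional factor $\varsigma_j(\lambda)$ via the auxiliary parameter $\Lambda^{*}$, and the constant $9$ from $\varphi(n)\ge n/9$ for $n\le 10^{30}$) faithfully reflects the construction in that reference, so there is nothing to correct.
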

For the weight function \eqref{eq:weight fct} we assume {that the} $\alpha_j$ satisfy for some constants $0<\alpha_{\min}<\alpha_{\max}<\infty${,}
\begin{align}
\max\Big\{\frac{\ln 2}{\rho_j},\alpha_{\min}\Big\}<\alpha_j\le \alpha_{\max},\qquad j\in\mathbb{N}.\label{eq:cond alphaj low up}
\end{align}
For example, {under Assumption \ref{assump:Bprime} letting} $\alpha_j:=1+\frac{\ln 2}{\rho_j}$ satisfies \eqref{eq:cond alphaj low up}
{with $\alpha_{\min}:=1$ and $\alpha_{\max}:=1+\sup_{j\ge1}\frac{\ln 2}{\rho_j}$.}

We have the following bound on $\norm{F}_{\mathcal{W}^s}^2$. The argument is essentially by Graham et al. \cite[Theorem 16]{GKNetal2014:lognormal}.
\begin{prop}\label{prop:Theorem 16'}
Suppose Assumption \ref{assump:Bprime} is satisfied with 
a positive sequence $(\rho_j)$ such that 
\begin{align}
(1/\rho_j)\in\ell^{1}.\label{eq:cond rho ell 1}
\end{align}
Then, we have
\begin{align}
\norm{F}_{\mathcal{W}^s}^2
\le
(C^*)^2\sumu\!
\frac1{\gamma_{\fraku}}
\bigg(\frac1{\prod\limits_{j\in\fraku}\rho_j}\bigg)^2
\prod\limits_{j\in\fraku}
\frac{1}{\alpha_j-({\ln 2})/\rho_j},
\label{eq:bd FWnorm}
\end{align}
with a positive constant 
$
C^*:=
\frac{\norm{f}_\Vdual\norm{\G}_\Vdual\sqrt{C_0}}{\inf_{x\in{\dom}}a_0(x)}
\left[\exp\left(
\frac12\sum_{j\ge1}\frac{(\ln2)^2}{\rho_j^2}+\frac2{\sqrt{2\pi}}\sum_{j\ge1}\frac{\ln2}{\rho_j}
\right)\right]<\infty.
$
\end{prop}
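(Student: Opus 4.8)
The plan is to follow the argument of Graham et al.\ \cite[Theorem 16]{GKNetal2014:lognormal}, with the new product-form derivative bound of Corollary \ref{cor:pd uy lognormal} playing the role of the POD-form bound used there. Since $F(\bs{y})=\G(u^s(\cdot,\bs{y}))$ and $\G$ is a bounded linear functional, it commutes with $\partial_{\bs{y}}$, so $\mixedfirst{F}{\bs{y}}=\G\bigl(\mixedfirst{u^s}{\bs{y}}\bigr)$ and Corollary \ref{cor:pd uy lognormal} (applied to $u^s$) yields, for a.e.\ $\bs{y}$,
\[
\Bigl|\mixedfirst{F}{\bs{y}}(\bs{y})\Bigr|
\le\|\G\|_{\Vdual}\,\Bigl\|\mixedfirst{u^s}{\bs{y}}\Bigr\|_V
\le\sqrt{C_0}\,\frac{\|f\|_{\Vdual}\,\|\G\|_{\Vdual}}{\amin(\bs{y})}\prod_{j\in\fraku}\frac1{\rho_j}.
\]
As in the proof of Corollary \ref{cor:pd uy lognormal}, $1/\amin(\bs{y})\le(\inf_{x\in\dom}a_0(x))^{-1}\exp\bigl(\esssup_{x\in\dom}|\sum_j y_j\psi_j(x)|\bigr)$, and by the triangle inequality $\esssup_{x\in\dom}|\sum_j y_j\psi_j(x)|\le\sum_j|y_j|\,b_j$ with $b_j:=\|\psi_j\|_{L^\infty(\dom)}$ (the sum being finite because only $y_1,\dots,y_s$ are nonzero). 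The structural point is that \eqref{eq:cond psi b1} forces $\rho_j b_j\le\kappa<\ln 2$, hence $b_j<(\ln 2)/\rho_j$; combined with the extra hypothesis \eqref{eq:cond rho ell 1} (which also gives $(1/\rho_j)\in\ell^2$), this makes both $\sum_j b_j$ and $\sum_j b_j^2$ finite.

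Next I would substitute these estimates into the definition \eqref{eq:F norm} and factorise. Splitting $\exp(\sum_j|y_j|b_j)=\prod_{j\in\fraku}e^{|y_j|b_j}\prod_{j\notin\fraku}e^{|y_j|b_j}$, the inactive factors pull out of the inner Gaussian integral, producing $\prod_{j\in\{1:s\}\setminus\fraku}\int_{\bbR}e^{|y_j|b_j}\phi(y_j)\dy_j$; after squaring, the elementary bound $\int_{\bbR}e^{c|y|}\phi(y)\dy\le\exp\bigl(\tfrac{c^2}{2}+\tfrac{2c}{\sqrt{2\pi}}\bigr)$ (complete the square, bound the resulting Gaussian tail, then use $1+t\le e^t$) controls the inactive contribution by $\exp\bigl(\sum_j b_j^2+\tfrac4{\sqrt{2\pi}}\sum_j b_j\bigr)$, uniformly in $\fraku$. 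For the active variables, the surviving factor $e^{2\sum_{j\in\fraku}|y_j|b_j}$ multiplies $\prod_{j\in\fraku}w_j^2(y_j)=\prod_{j\in\fraku}e^{-2\alpha_j|y_j|}$ from \eqref{eq:weight fct}, and integrating gives $\prod_{j\in\fraku}\int_{\bbR}e^{-2(\alpha_j-b_j)|y_j|}\dy_j=\prod_{j\in\fraku}(\alpha_j-b_j)^{-1}$, which is finite since \eqref{eq:cond alphaj low up} together with $b_j<(\ln 2)/\rho_j$ gives $\alpha_j-b_j>0$. Finally, enlarging each $b_j$ to $(\ln 2)/\rho_j$ throughout (this only increases $(\alpha_j-b_j)^{-1}$ and the exponential constant) collects everything into \eqref{eq:bd FWnorm} with exactly the claimed constant, because $(C^*)^2=\bigl(\tfrac{\|f\|_\Vdual\|\G\|_\Vdual\sqrt{C_0}}{\inf_{x\in\dom}a_0(x)}\bigr)^2\exp\bigl(\sum_j\tfrac{(\ln2)^2}{\rho_j^2}+\tfrac4{\sqrt{2\pi}}\sum_j\tfrac{\ln2}{\rho_j}\bigr)$, and the two series in the exponent converge precisely by \eqref{eq:cond rho ell 1}.

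I do not expect a genuine obstacle: Corollary \ref{cor:pd uy lognormal} carries the analytic content, and the remainder mirrors \cite{GKNetal2014:lognormal}. The points needing care are (i) the bookkeeping of which exponential factors are integrated against the density $\phi$ (the inactive $y_j$, sitting inside the squared inner integral) versus against the weight $w_j^2$ (the active $y_j$); (ii) the interchange of the sum over $\fraku$ with the integrals, which is legitimate since all integrands are non-negative; and (iii) recognising that it is hypothesis \eqref{eq:cond rho ell 1}, not merely \eqref{eq:cond 1/rho b2prime}, that guarantees convergence of $\sum_j\rho_j^{-1}$ and $\sum_j\rho_j^{-2}$, and hence finiteness of $C^*$.
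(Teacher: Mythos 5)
Your proof is correct and follows essentially the same route as the paper: the product-form derivative bound from Corollary \ref{cor:pd uy lognormal}, the exponential bound on $1/\amin(\bs{y})$, and then the Gaussian-integral bookkeeping of \cite[Theorem 16]{GKNetal2014:lognormal}. The only cosmetic difference is that you pass through $b_j=\|\psi_j\|_{L^\infty(\dom)}$ and enlarge $b_j$ to $(\ln 2)/\rho_j$ at the end, whereas the paper bounds $\esssup_x|\sum_j y_j\psi_j(x)|\le(\ln 2)\sum_j|y_j|/\rho_j$ directly from \eqref{eq:cond psi b1}; both yield the identical constant $C^*$.
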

\begin{proof}
In this proof we abuse the notation slightly and $\bs{y}$ always denotes $(y_1,\dotsc,y_s,0,0,\dotsc)\in\mathbb{R}^{\mathbb{N}}$. 
From \eqref{eq:cond psi b1} and \eqref{eq:cond rho ell 1}, {in view of}
Corollary \ref{cor:pd uy lognormal} for $\Prob_Y$-almost every $\bs{y}$ we have 
\begin{align}
\left|
\mixedfirst{F}{\bs{y}}
\right|
\le \norm{\G}_{\Vdual}\norm{\mixedfirst{u^s}{\bs{y}}}_V
\le \norm{\G}_{\Vdual}\sqrt{{C_0}}\frac1{\prod\limits_{j\in\fraku}\rho_j}
\frac{\norm{f}_{\Vdual}}{\amin(\bs{y})}.\label{eq:bd F}
\end{align}
Since $$
\sup_{x\in\dom}\sum_{j\ge1}|y_j||\psi_j(x)|
\le 
\Big(
\sup_{j\ge1}\frac{|y_j|}{\rho_j}
\Big)
\sup_{x\in\dom}\sum_{j\ge1}\rho_j|\psi_j(x)|\le \Big(
\sum_{j\ge1}\frac{|y_j|}{\rho_j}
\Big)
\sup_{x\in\dom}\sum_{j\ge1}\rho_j|\psi_j(x)|,$$
the condition \eqref{eq:cond psi b1} 
and equations \eqref{eq:bd F} and \eqref{eq:bd aminy 1} together with $y_j=0$ {for $j>s$,} imply
\begin{align}
\left|
\mixedfirst{F}{\bs{y}}
\right|
\le 
\frac{K^*}{\prod\limits_{j\in\fraku}\rho_j}
\prod_{j\in\{1:s\}}\exp\bigg(
\frac{\ln2}{\rho_j}|y_j|
\bigg),
\end{align}
where $K^*:=\frac{\norm{f}_\Vdual\norm{\G}_\Vdual\sqrt{{C_0}}}
{\inf_{x\in{\dom}} a_0(x)}
$.
Then it follows {from \eqref{eq:F norm}} that
\begin{align}
\norm{F}&_{\mathcal{W}^s}^2
{=}
\sumu
\frac1{\gamma_{\fraku}}
\int_{\mathbb{R}^{|\fraku|}}\!\!\!
\left(
\int_{\mathbb{R}^{s-|\fraku|}}
\left|
\mixedfirst{F}{\bs{y}}(\bs{y}_\fraku;\bs{y}_{\{1:s\}\setminus\fraku})
\right|
\prod\limits_{j\in\{1:s\}\setminus\fraku}
\phi(y_j)\dbsy_{\{1:s\}\setminus\fraku}
\right)^2\!\!
\prod\limits_{j\in\fraku}
w_j^2(y_j)\dbsy_\fraku
\\
\le&
\sumu\!
\frac1{\gamma_{\fraku}}
\int_{\mathbb{R}^{|\fraku|}}\!\!\!
\left(
\int_{\mathbb{R}^{s-|\fraku|}}
\frac{K^*}{\prod\limits_{j\in\fraku}\rho_j}
\prod_{j\in\{1:s\}}\exp\bigg(
\frac{\ln2}{\rho_j}|y_j|
\bigg)
\prod\limits_{j\in\{1:s\}\setminus\fraku}
\phi(y_j)\dbsy_{\{1:s\}\setminus\fraku}
\right)^2\!\!
\prod\limits_{j\in\fraku}
w_j^2(y_j)\dbsy_\fraku\\
=&
(K^*)^2\sumu\!
\frac1{\gamma_{\fraku}}
\bigg(\frac1{\prod\limits_{j\in\fraku}\rho_j}\bigg)^2\nonumber\\
&\times
\left(
\int_{\mathbb{R}^{s-|\fraku|}}
\prod_{j\in\{1:s\}\setminus\fraku}\exp\bigg(
\frac{\ln2}{\rho_j}|y_j|
\bigg)
\prod\limits_{j\in\{1:s\}\setminus\fraku}
\phi(y_j)\dbsy_{\{1:s\}\setminus\fraku}
\right)^2\nonumber\\
&\times
\int_{\mathbb{R}^{|\fraku|}}
\prod_{j\in\fraku}\exp\bigg(
\frac{2\ln2}{\rho_j}|y_j|
\bigg)
\prod\limits_{j\in\fraku}
w_j^2(y_j)\dbsy_\fraku.
\end{align}
{Note that this takes essentially the same form as \cite[(4.14)]{GKNetal2014:lognormal}. Thus, the rest of the proof is in parallel to that of \cite[Theorem 16]{GKNetal2014:lognormal}.}
%

{Noting that }$2\alpha_j-\frac{2\ln2}{\rho_j}<0$, {and} following the same argument as {in }\cite[(4.15)--(4.17)]{GKNetal2014:lognormal}, we have
\begin{align}
\norm{F}_{\mathcal{W}^s}^2
\le&
(K^*)^2\sumu\!
\frac1{\gamma_{\fraku}}
\bigg(\frac1{\prod\limits_{j\in\fraku}\rho_j}\bigg)^2
\left(
\prod\limits_{j\in\{1:s\}\setminus\fraku}
2\exp\Big(\frac{(\ln2)^2}{2\rho_j^2}\Big)\Phi\Big(\frac{\ln2}{\rho_j}\Big)
\right)^2
\prod_{j\in\fraku}\frac1{\alpha_j-\frac{\ln2}{\rho_j}},
\end{align}
with $\Phi(\cdot)$ denoting the cumulative standard normal distribution function. 
Comparing this to \cite[Equation (4.17)]{GKNetal2014:lognormal}, the statement follows from the rest of the proof of \cite[Theorem 16]{GKNetal2014:lognormal}.
\end{proof}
%
{As in \cite[Theorem 17]{GKNetal2014:lognormal}, from Theorem \ref{thm:Graham etal thm15}} and Proposition \ref{prop:Theorem 16'} we have the following.
\begin{prop}\label{prop:Graham et al. Theorem 17'}
For each $j\ge 1$, let $w_j(t)=\exp(-2\alpha_j|t|)$ ($t\in\mathbb{R}$) with $\alpha_j$ satisfying \eqref{eq:cond alphaj low up}. Given $s$, $n\in\mathbb{N}$ with $2\le n\le 10^{30}$, weights $\bs{\gamma}=(\gamma_{\fraku})_{\fraku\subset\mathbb{N}}$, and the standard normal density function $\phi$, a randomly shifted lattice rule with $n$ points in $s$ dimensions can be constructed by a component-by-component algorithm such that, for all $\lambda\in(1/2,1]$,
\begin{align}
\sqrt{
\E^{\bs{\Delta}}
\left|
I_s(F)
-
\mathcal{Q}_{s,n}(\bs{\Delta};F)
\right|^2
}
\le
9C^*C_{\bs{\gamma},s}(\lambda)n^{-\frac{1}{2\lambda}},
\end{align}
with
\begin{align}
C_{\bs{\gamma},s}(\lambda):=
\left(
\sum_{\emptyset\neq\fraku\subseteq\{1:s\}}
\gamma_{\fraku}^\lambda
\prod\limits_{j\in\fraku}\varsigma_j(\lambda)
\right)^{\frac1{2\lambda}}
\left(
\sum_{\fraku\subseteq\{1:s\}}
\frac1{\gamma_{\fraku}}
\bigg(\frac1{\prod\limits_{j\in\fraku}\rho_j}\bigg)^2
\prod\limits_{j\in\fraku}
	\frac{1}{[\alpha_j-{\ln2}/{\rho_j}]}
\right)^\frac12,\label{eq:C_gammas}
\end{align}
and $C^*$ defined {as} in Proposition \ref{prop:Theorem 16'}.
\bksq
\end{prop}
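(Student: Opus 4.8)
The plan is simply to compose the two preceding results, Theorem~\ref{thm:Graham etal thm15} and Proposition~\ref{prop:Theorem 16'}; no new idea is required. Throughout I would work under the hypotheses carried over by the very definition of $C^*$, namely Assumption~\ref{assump:Bprime} together with $(1/\rho_j)\in\ell^{1}$, and with $(\alpha_j)$ satisfying \eqref{eq:cond alphaj low up} (in particular $\alpha_j>(\ln 2)/\rho_j$, which is exactly what makes the exponential-weight integrals in the proof of Proposition~\ref{prop:Theorem 16'} converge). The first step is to verify that $F\in\mathcal{W}^s$, so that Theorem~\ref{thm:Graham etal thm15} applies: the measurability of $\bs{y}\mapsto\G(u^s(\cdot,\bs{y}))$ is recorded in Remark~\ref{rem:on bbdness}, and the finiteness of $\norm{F}_{\mathcal{W}^s}$ is immediate from \eqref{eq:bd FWnorm}, whose right-hand side is a finite sum over $\fraku\subseteq\{1:s\}$ of finite terms once $C^*<\infty$ is known (also asserted in Proposition~\ref{prop:Theorem 16'}).

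Next I would apply Theorem~\ref{thm:Graham etal thm15} with the weight functions \eqref{eq:weight fct}, which yields, for every $\lambda\in(1/2,1]$,
\[
\sqrt{\E^{\bs{\Delta}}\bigl|I_s(F)-\mathcal{Q}_{s,n}(\bs{\Delta};F)\bigr|^2}
\le
9\Bigl(\sum_{\emptyset\neq\fraku\subseteq\{1:s\}}\gamma_{\fraku}^{\lambda}\prod_{j\in\fraku}\varsigma_j(\lambda)\Bigr)^{1/(2\lambda)}n^{-1/(2\lambda)}\norm{F}_{\mathcal{W}^s},
\]
and then substitute the bound
\[
\norm{F}_{\mathcal{W}^s}\le C^*\Bigl(\sum_{\fraku\subseteq\{1:s\}}\frac{1}{\gamma_{\fraku}}\Bigl(\frac{1}{\prod_{j\in\fraku}\rho_j}\Bigr)^{2}\prod_{j\in\fraku}\frac{1}{\alpha_j-(\ln 2)/\rho_j}\Bigr)^{1/2}
\]
supplied by Proposition~\ref{prop:Theorem 16'}. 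Multiplying the two bracketed factors reproduces precisely $C_{\bs{\gamma},s}(\lambda)$ as defined in \eqref{eq:C_gammas}, so the claimed estimate $9C^*C_{\bs{\gamma},s}(\lambda)n^{-1/(2\lambda)}$ follows at once, with $C^*$ finite by Proposition~\ref{prop:Theorem 16'}.

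There is essentially no obstacle here: the argument is the bookkeeping composition of Theorems~15 and~16 of \cite{GKNetal2014:lognormal}, adapted to the product-form derivative bound of Corollary~\ref{cor:pd uy lognormal}. The only point I would take care to state explicitly is that the generating vector produced by the component-by-component algorithm in Theorem~\ref{thm:Graham etal thm15} depends on $\bs{\gamma}$, $s$ and $n$ but not on $\lambda$, so one and the same randomly shifted lattice rule satisfies the displayed bound simultaneously for every $\lambda\in(1/2,1]$; the subsequent optimisation of $\lambda$ (in terms of the summability exponent $q$) and of the weights $\bs{\gamma}$, needed to make $C_{\bs{\gamma},s}(\lambda)$ bounded independently of $s$, belongs to the convergence-rate theorem that follows rather than to this proposition.
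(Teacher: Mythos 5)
Your proof is correct and is exactly what the paper intends: the proposition is stated with no written proof precisely because it is the immediate composition of Theorem~\ref{thm:Graham etal thm15} with the bound \eqref{eq:bd FWnorm} from Proposition~\ref{prop:Theorem 16'}, which is what you do. Your added remark that the CBC-constructed rule is independent of $\lambda$ (so the bound holds simultaneously for all $\lambda\in(1/2,1]$) is a correct and worthwhile clarification, consistent with the statement of Theorem~\ref{thm:Graham etal thm15}.
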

We choose weights of the product form
\begin{align}
\gamma_\fraku=\gamma^*_\fraku(\lambda)
:=
\left[
\bigg(\frac1{\prod\limits_{j\in\fraku}\rho_j}\bigg)^2
\prod\limits_{j\in\fraku}
\frac{1}
{\varsigma_j(\lambda)[\alpha_j-\ln 2/\rho_j]}
\right]^{\frac{1}{1+\lambda}}\label{eq:weight}
\end{align}
Then, it turns out that under a suitable value of $\lambda$ the constant \eqref{eq:C_gammas} can be bounded independently of $s$, and we have the QMC {error bound} as follows.
\begin{theorem}\label{thm:conv rate}
For each $j\ge 1$, let $w_j(t)=\exp(-2\alpha_j|t|)$ ($t\in\mathbb{R}$) with $\alpha_j$ satisfying \eqref{eq:cond alphaj low up}.  Let $\varsigma_{\max}(\lambda)$ be $\varsigma_j$ defined by \eqref{eq:varsigmaj} but $\alpha_j$ being replaced by $\alpha_{\max}$. 
Suppose $(\psi_j)$ satisfies Assumption \ref{assump:B}. 
Suppose further that, we choose $\lambda$ as 
\begin{align}
\lambda=
\begin{cases}\label{eq:def lambda q}
\frac1{2-2\delta}\text{ for arbitrary }\ \delta\in(0,\frac12]
&\text{ when }q\in(0,\frac23]\\
\frac{q}{2-q}
&\text{ when }q\in(\frac23,1],
\end{cases}
\end{align}
{and} choose the weights {$\gamma_\fraku$} as {in} \eqref{eq:weight}. 
%

Then, given $s$, $n\in\mathbb{N}$ with $n\le 10^{30}$, 
and the standard normal density function $\phi$, a randomly shifted lattice rule with $n$ points in $s$ dimensions can be constructed by a component-by-component algorithm such that
\begin{align}
\sqrt{
\E^{\bs{\Delta}}
\left|
I_s(F)
-
\mathcal{Q}_{s,n}(\bs{\Delta};F)
\right|^2
}
\le
\begin{cases}
9C_{\rho,q,\delta}C^* n^{-(1-\delta)}
&\text{when } 0<q\le\frac23,\\
9C_{\rho,q}C^* n^{-\frac{2-q}{2q}}
&\text{when } \frac23<q\le1.
\end{cases}\label{eq:error order}
\end{align}
where the constant $C_{\rho,q,\delta}$, (resp. $C_{\rho,q}$) is independent of $s$ but depends on $\rho:=(\rho_j)$, $q$ and $\delta$ (resp. $\rho$ and $q$), and $C^*$ is defined {as} in Proposition \ref{prop:Theorem 16'}.

In particular, with $\alpha_j:=1+\ln2/\rho_j$ we have $\gamma_\fraku=
\Big[
\Big(\frac1{\prod\limits_{j\in\fraku}\rho_j}\Big)^2
\prod\limits_{j\in\fraku}
\frac{1}
{\varsigma_j(\lambda)}
\Big]^{\frac{1}{1+\lambda}}$, and the same result as above holds with the
finite constants $C_{\rho,q,\delta}$, and $C_{\rho,q}$ {both} given by
$$
{C_{\rho,q,\delta}=C_{\rho,q}=}
\Bigg(
	\prod_{j=1}^{\infty}
	\bigg(
	1+
	\Big(
		\frac{\varsigma_j(\lambda)}{\rho_j^{2\lambda}}
	\Big)^{\frac{1}{1+\lambda}}
	\bigg)
	-1
\Bigg)^{\frac{1}{2\lambda}}
\Bigg(
	\prod_{j=1}^{\infty}
	\bigg(
	1+
	\Big(
		\frac{\varsigma_j(\lambda)}{\rho_j^{2\lambda}}
	\Big)^{\frac{1}{1+\lambda}}
	\bigg)
\Bigg)^{\frac{1}{2}},
$$
with $\lambda$ given by \eqref{eq:def lambda q}.
\end{theorem}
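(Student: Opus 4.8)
The plan is to combine Proposition~\ref{prop:Graham et al. Theorem 17'} with the product-weight choice \eqref{eq:weight} and then bound the $s$-independent constant $C_{\bs\gamma,s}(\lambda)$ uniformly in $s$. First I would substitute $\gamma_\fraku=\gamma^*_\fraku(\lambda)$ into the two factors of \eqref{eq:C_gammas}. For the first factor, $\gamma_\fraku^\lambda\prod_{j\in\fraku}\varsigma_j(\lambda)=\prod_{j\in\fraku}\big(\varsigma_j(\lambda)/\rho_j^{2\lambda}\big)^{\lambda/(1+\lambda)}\varsigma_j(\lambda)=\prod_{j\in\fraku}\varsigma_j(\lambda)^{1/(1+\lambda)}\rho_j^{-2\lambda^2/(1+\lambda)}$; for the second factor, $\gamma_\fraku^{-1}\big(\prod_{j\in\fraku}\rho_j\big)^{-2}\prod_{j\in\fraku}[\alpha_j-\ln2/\rho_j]^{-1}=\prod_{j\in\fraku}\big(\varsigma_j(\lambda)/\rho_j^{2\lambda}\big)^{1/(1+\lambda)}\cdot$ (the leftover powers of $\rho_j^{-2}$ and $[\alpha_j-\ln2/\rho_j]^{-1}$ combining cleanly by the exponent bookkeeping built into \eqref{eq:weight}). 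The standard ``balancing'' identity for product weights then shows that both factors are governed by the same product $\prod_{j\in\fraku}b_j$ with $b_j:=\big(\varsigma_j(\lambda)/\rho_j^{2\lambda}\big)^{1/(1+\lambda)}$ up to $\le[\alpha_j-\ln2/\rho_j]^{-1}\le(\alpha_{\min})^{-1}$ factors absorbed into the constant; hence $C_{\bs\gamma,s}(\lambda)\le\big(\prod_{j=1}^\infty(1+b_j)-1\big)^{1/(2\lambda)}\big(\prod_{j=1}^\infty(1+b_j)\big)^{1/2}$, using $\sum_{\emptyset\ne\fraku\subseteq\{1:s\}}\prod_{j\in\fraku}b_j\le\prod_{j=1}^\infty(1+b_j)-1$ and likewise for the sum over all $\fraku$.

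Next I would verify that $\prod_{j=1}^\infty(1+b_j)<\infty$, i.e. $\sum_j b_j<\infty$. Here Assumption~\ref{assump:B}, specifically \eqref{eq:cond 1/rho b2} with $(1/\rho_j)\in\ell^q$, enters: since $\varsigma_j(\lambda)$ is bounded above by $\varsigma_{\max}(\lambda)$ uniformly in $j$ (because $\alpha_j\le\alpha_{\max}$), we have $b_j\le C\,\rho_j^{-2\lambda/(1+\lambda)}$, and $\sum_j\rho_j^{-2\lambda/(1+\lambda)}<\infty$ precisely when $2\lambda/(1+\lambda)\ge q$, i.e. $\lambda\ge q/(2-q)$. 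The choice \eqref{eq:def lambda q} is exactly engineered so that this holds: for $q\in(2/3,1]$ one takes $\lambda=q/(2-q)$ (the boundary case, which forces $\lambda\le1$ since $q\le1$), and for $q\in(0,2/3]$ one takes $\lambda=1/(2-2\delta)$, which exceeds $q/(2-q)$ for every $\delta\in(0,1/2]$ while still lying in $(1/2,1]$. With summability in hand the infinite products converge, giving the displayed finite constant $C_{\rho,q,\delta}=C_{\rho,q}$.

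Finally I would translate $C_{\bs\gamma,s}(\lambda)\le C_{\rho,q}$ back through Proposition~\ref{prop:Graham et al. Theorem 17'}: the bound reads $\le 9C^*C_{\bs\gamma,s}(\lambda)n^{-1/(2\lambda)}\le 9C^*C_{\rho,q}n^{-1/(2\lambda)}$, and substituting the two cases of $\lambda$ gives $n^{-1/(2\lambda)}=n^{-(1-\delta)}$ when $q\le2/3$ and $n^{-(2-q)/(2q)}$ when $q>2/3$, which is exactly \eqref{eq:error order}. For the ``in particular'' clause I would take $\alpha_j=1+\ln2/\rho_j$, so $\alpha_j-\ln2/\rho_j=1$ and the weight simplifies to $\gamma_\fraku=\big[(\prod_{j\in\fraku}\rho_j)^{-2}\prod_{j\in\fraku}\varsigma_j(\lambda)^{-1}\big]^{1/(1+\lambda)}$, and the $[\alpha_j-\ln2/\rho_j]^{-1}$ factors disappear entirely, yielding the clean product formula for the constant with $b_j=(\varsigma_j(\lambda)/\rho_j^{2\lambda})^{1/(1+\lambda)}$ as stated. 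The main obstacle I anticipate is not any single estimate but the exponent bookkeeping in the first paragraph: one must check that the powers of $\rho_j$, $\varsigma_j(\lambda)$, and $[\alpha_j-\ln2/\rho_j]$ coming from $\gamma_\fraku^\lambda\prod\varsigma_j$ and from $\gamma_\fraku^{-1}(\prod\rho_j)^{-2}\prod[\alpha_j-\ln2/\rho_j]^{-1}$ genuinely collapse to the same $\prod_j b_j$ (this is the defining property that makes \eqref{eq:weight} the ``right'' product weight), and then that the chosen $\lambda$ simultaneously keeps $\lambda\in(1/2,1]$ (needed for Theorem~\ref{thm:Graham etal thm15}) and $2\lambda/(1+\lambda)\ge q$ (needed for summability) — a small but essential case analysis that \eqref{eq:def lambda q} resolves.
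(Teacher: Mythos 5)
Your proposal is correct and follows essentially the same route as the paper: substitute the product weights \eqref{eq:weight} into \eqref{eq:C_gammas}, observe that both factors collapse to sums of $\prod_{j\in\fraku}\beta_j(\lambda)$ with $\beta_j(\lambda)=\bigl(\varsigma_j(\lambda)^{1/\lambda}/(\rho_j^2[\alpha_j-\ln2/\rho_j])\bigr)^{\lambda/(1+\lambda)}$, rewrite these as $\prod_j(1+\beta_j)$, and reduce convergence to $\sum_j\rho_j^{-2\lambda/(1+\lambda)}<\infty$ via the same case analysis on $q$ versus $2\lambda/(1+\lambda)$. Two small corrections: the first factor works out to $\prod_{j\in\fraku}\varsigma_j(\lambda)^{1/(1+\lambda)}\rho_j^{-2\lambda/(1+\lambda)}[\alpha_j-\ln2/\rho_j]^{-\lambda/(1+\lambda)}$ (not $\rho_j^{-2\lambda^2/(1+\lambda)}$), and the uniform bound on $[\alpha_j-\ln2/\rho_j]^{-1}$ is not $\alpha_{\min}^{-1}$ but $\mathcal{J}^{-1}$ with $\mathcal{J}:=\inf_{j\ge1}(\alpha_j-\ln2/\rho_j)>0$, whose positivity uses $\rho_j\to\infty$ from \eqref{eq:cond 1/rho b2} together with \eqref{eq:cond alphaj low up}.
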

\begin{proof}
Let $\beta_j(\lambda):=\Big(
	\frac{(\varsigma_j(\lambda))^{\frac1{\lambda}}}
	{\rho_j^2[\alpha_j-\ln 2/\rho_j]}\Big)
	^{\frac{\lambda}{1+\lambda}}$. Observe that with the choice of weights \eqref{eq:weight} we have
\begin{align}
C_{\bs{\gamma},s}(\lambda)
&=
\Bigg(
	\sum\limits_{\emptyset\neq\fraku\subseteq\{1:s\}}
	\prod\limits_{j\in\fraku}
	\beta_j(\lambda)
\Bigg)^{\frac1{2\lambda}}
\Bigg(
		\sum\limits_{\fraku\subseteq\{1:s\}}
		\prod\limits_{j\in\fraku}
		\beta_j(\lambda)
\Bigg)^\frac12 \\
&=
\left(
	\bigg( \prod\limits_{j=1}^{s} (1+\beta_j(\lambda)) \bigg)
	-1
\right)^{\frac1{2\lambda}}
\left(
	\prod\limits_{j=1}^{s} (1+\beta_j(\lambda))
\right)^\frac12.
\end{align}
Now, let $\mathcal{J}:=\inf_{j\ge 1}(\alpha_j-\ln2/\rho_j)$, which is a positive value from \eqref{eq:cond alphaj low up}. 
Further, note that $\varsigma_j(\lambda)\le \varsigma_{\max}(\lambda)$ for $j\ge1$. 
Then, from $\beta_j(\lambda)\ge0$ we have
\begin{align}
\prod\limits_{j=1}^{s} (1+\beta_j(\lambda))
\le 
\prod\limits_{j=1}^{s} \exp({\beta_j(\lambda)})
\le \exp\Big(\sum_{j\ge1}{\beta_j(\lambda)}\Big)
\le 
\exp\left(
\left[
\frac{[\varsigma_{\max}(\lambda)]^{\frac{1}{\lambda}}}{\mathcal{J}}
\right]^{\frac{\lambda}{1+\lambda}}
\sum_{j\ge1}
\left[
\frac{1}{\rho_j}
\right]^{\frac{2\lambda}{1+\lambda}}
\right).
\end{align}
Thus, if 
$
\sum_{j\ge1}
\left[
\frac{1}{\rho_j}
\right]^{\frac{2\lambda}{1+\lambda}}
<\infty
$
 we can conclude that $C_{\bs{\gamma},s}(\lambda)$ {is} bounded independently of $s$. 

We discuss the relation between $q$ and the exponent ${\frac{2\lambda}{1+\lambda}}$. 
 First note that from $\lambda\in(\frac12,1]$, we have
$
\frac23<{\frac{2\lambda}{1+\lambda}}\le 1.
$
Suppose $0<q\le\frac23$.
in this case, we always have $q<{\frac{2\lambda}{1+\lambda}}$, and thus $(1/\rho_j)\in\ell^{\frac{2\lambda}{1+\lambda}}$. Thus, {$\sum_{j\ge1}
\left[
\frac{1}{\rho_j}
\right]^{\frac{2\lambda}{1+\lambda}}
<\infty$} follows. Letting 
$
\lambda:=\frac1{2-2\delta}
$ 
with an arbitrary $\delta\in(0,\frac12]$, we obtain the result for $q\in(0,\frac23]$. Next, consider the case $\frac23<q\le1$. Then, letting $\lambda:=\lambda(q)=\frac{q}{2-q}$, we have $\lambda\in(1/2,1]$ and 
\begin{align}
\frac{2\lambda}{1+\lambda}
=
\frac{2\frac{q}{2-q}}{1+\frac{q}{2-q}}
=
\frac{2{q}}{2-q+q}=q,
\end{align}
and thus $\sum_{j\ge1}
\left[
\frac{1}{\rho_j}
\right]^{\frac{2\lambda}{1+\lambda}}
<\infty$.
\end{proof}
\section{{Application to a wavelet stochastic model}}\label{sec:applications}

%
%
Cioica et al. \cite{Cioica.P.A_etal_2012_BIT} considered a stochastic model {in} which users can choose the smoothness at {will}. In this section, we consider the Gaussian case, and show that the theory developed in Section \ref{sec:QMC error} can be applicable for the model with a wide range of smoothness. 
\subsection{Stochastic model}
For simplicity we assume $\dom$ is a bounded convex polygonal domain. Consider a wavelet system $(\varphi_{\xi})_{\xi\in\nabla}$ that is a Riesz basis for $L^2(\dom)$-space. We explain the notations and outline the standard properties we assume as follows. The indices $\xi\in\nabla$ typically encodes both the scale, often denoted by $|\xi|$, and the spatial location, and also the type of the wavelet. Since our analysis does not rely on the choice of a type of wavelet, we often use the notation $\xi=(\ell,k)$, and 
$\nabla=\{(\ell,k)\mid \ell\ge \ell_0,k\in\nabla_{\ell}\}$ where $\nabla_{\ell}$ is some countable index set. The scale level $\ell$ of $\varphi_\xi $ is denoted by $|\xi|=|(\ell,k)|=\ell$. 
Furthermore, $(\widetilde{\varphi}_{\xi})_{\xi\in\nabla}$ denotes the dual wavelet basis, i.e., $\langle {\varphi}_{\xi}, \widetilde{\varphi}_{\xi'} \rangle_{L^2(\dom)}=\delta_{\xi\xi'}$, $\xi,\xi'\in\nabla$.

In the following, $\alpha \lesssim \beta$ means that $\alpha$ can be bounded by some constant times $\beta$ uniformly with respect to any parameters on which $\alpha$ and $\beta$ may depend. Further, $\alpha\sim \beta$ means that $\alpha \lesssim \beta$ and $\beta \lesssim \alpha$.

We list the assumption on wavelets: 
\begin{enumerate}[(W1)]
\item \label{eq:item wavelets basis}
the wavelets $(\varphi_{\xi})_{\xi\in\nabla}$ form a Riesz basis for $L^2(\dom)$;
\item the cardinality of the index set ${\nabla_{\ell}}$ {satisfies} $\#{\nabla_{\ell}}=C_{\nabla} 2^{\ell d}$ for some constant $C_{\nabla}>0$;
\item 
the wavelets are local. That is, the supports of $\varphi_{{\ell,k}}$ are contained in balls of diameter $\sim {2^{-\ell}}$, and do not overlap too much in the following sense: there exists a constant $M>0$ independent of $\ell$ such that for each given $\ell$ for any $x\in\dom$, 
\begin{align}
\#\{
k\in\nabla_{\ell}
\mid 
\varphi_{\ell,k} (x)\neq 0
\}
\le M;\label{eq:wavelet level-wise finite overlap}
\end{align}
\item the wavelets satisfy the cancellation property 
\begin{align*}
|\langle v,\varphi_{\xi}\rangle_{L^2(\dom)}|
\lesssim 2^{-|\xi|(\frac{d}{2}+\tilde{m})}|v|_{W^{\tilde{m},\infty}(\supp(\varphi_{\xi}))},
\end{align*}
for $|\xi|\ge \ell_0$ with some parameter $\tilde{m}\in\bbN$, where $|\cdot|_{W^{\tilde{m},\infty}}$ denotes the usual Sobolev semi-norm. 
That is, the inner product is small when the function $v$ is smooth on the support $\supp(\varphi_{\xi})$;
\item\label{eq:item wavelets Besov characterisation} the wavelet basis induces characterisations of Besov spaces  $B^t_{\Besovq}(L_{\Besovp}(\dom))$ for ${1 \le}\Besovp,\Besovq<\infty$ and all $t$ with $d\max\{1/\Besovp-1,0\}<t<t_*$ for some parameter $t_*>0$. The upper bound $t_*$ depends on the choice of wavelet basis. Since $t$ we consider is typically small, here for simplicity we may \textit{define} the Besov norm as 
\begin{align}
\norm{v}_{B^t_{\Besovq}(L_{\Besovp}(\dom))}
:=
\Bigg(
\sum_{\ell=\ell_0}^{\infty}
2^{\ell\big(
t+d\big(\frac12-\frac{1}{\Besovp}\big)
\big)\Besovq}
\bigg(
\sum_{k\in\nabla_{\ell}}
|\langle v, \tilde{\varphi}_{\ell,k}\rangle_{L^2(\dom)}|^{\Besovp}
\bigg)^{\frac{\Besovq}{\Besovp}}
\Bigg)^{\frac1{\Besovq}},
\end{align}
\item\label{eq:item wavelets growth} the wavelets satisfy 
\begin{align}
\sup_{x\in\dom}|\varphi_{\ell,k}(x)|=C_{\varphi} 2^{\frac{\beta_0 d}{2}\ell}
\qquad
\text{ with some }{\beta_0}\in\mathbb{R_{+}},\label{eq:Klaus wavelet decay}
\end{align}
for some constant $C_{\varphi}>0$. 
Typically we have $\varphi_{\ell,k}\sim 2^{\frac{d}2\ell}\psi(2^{\ell}(x-x_{\ell,k}))$, for some bounded function $\psi$. In this case we have $\beta_0=1.$
\end{enumerate}
See \cite[section 2.1]{Cioica.P.A_etal_2012_BIT} and references therein for further details. See also \cite{Cohen.A_book_wavelet_2003,DeVore.R.A_1998_Acta_nonlinear,Urban.K_2002_book_wavelets_simulation}.

We now investigate a stochastic model expanded by the wavelet basis described above. 
Let $\{Y_{\ell,k}\}$ be a collection of independent standard normal random variables on a suitable probability space $(\Omega',\scrF',\Prob')$. 
We assume the random field \eqref{eq:random coeff} is given with $T$ such that 
\begin{align}
T(x,\omega')
=
\sum_{\ell=\ell_0}^\infty\sum_{k\in\nabla_{\ell}}
Y_{\ell,k}(\omega')
\sigma_{\ell} \varphi_{\ell,k}(x),\label{eq:Klaus model}
\end{align}
where 
\begin{align}
\sigma_{\ell}:=2^{-\frac{\beta_1d }2\ell}\text{ with }\beta_1>1.
\label{eq:cond wavelet sigma}
\end{align}
{
From $\E_{\Prob'}
\Big(
\sum_{\ell=\ell_0}^{\infty} 
\sum_{k\in\nabla_{\ell}} Y_{\ell,k}(\omega')^2 \sigma_{\ell}^2\Big)
=C_{\nabla}\sum_{\ell=\ell_0}^{\infty} 2^{-(\beta_1-1)d \ell}
<\infty$, in view of (W\ref{eq:item wavelets basis}) the series \eqref{eq:Klaus model} converges $\Prob'$-almost surely in $L^2(\dom)$.}

{To replace \eqref{eq:random coeff},} we consider the following log-normal stochastic model:
 \begin{align}
a(x,\omega')=a_*(x)
+
{a_0(x)}
\exp
\bigg(
\sum_{\ell=\ell_0}^\infty
\sum_{k\in\nabla_{\ell}}
Y_{\ell,k}(\omega')\sigma_{\ell} \varphi_{\ell,k}(x)\bigg).\label{eq:sto model}
\end{align}
In the following, we argue that we can reorder $\sigma_{\ell} \varphi_{\ell,k}$ lexicographically as $\sigma_{j} \varphi_{j}$ and see it as $\psi_j$, while keeping the law. 

Throughout this section, we assume {that} the parameters $\beta_0$ {and} $\beta_1$ {satisfy} \begin{align}
{0}<{\beta_1} - {\beta_0},
\end{align}
and {that} point evaluation $\varphi_{\ell,k}(x)$ ($(\ell,k)\in\nabla$) is well-defined for any $x\in\dom$. Under this assumption, reordering $(Y_{\ell,k}\sigma_{\ell} \varphi_{\ell,k})$ lexicographically does not change the law of \eqref{eq:Klaus model} on $\bbR^{\dom}$. To see this, from the Gaussianity it suffices to show that the covariance function $\E_{\Prob'}[T(\cdot)T(\cdot)]\from\dom\times\dom\to\bbR$ is invariant under the reordering. 

Fix $x\in\dom$ arbitrarily. For any $L$, $L'$ ($L{>} L'$), from the independence of $\{Y_{\ell,k}\}$ we have 
\begin{align}
\E_{\Prob'}
\bigg(
\sum_{\ell=\ell_0}^{L} 
\sum_{k\in\nabla_{\ell}} Y_{\ell,k}(\omega') \sigma_{\ell} \varphi_{\ell,k}(x)
-
\sum_{\ell=\ell_0}^{L'} 
\sum_{k\in\nabla_{\ell}} Y_{\ell,k}(\omega') \sigma_{\ell} \varphi_{\ell,k}(x)
\bigg)^2&=
\sum_{\ell=L'+1}^L \sum_{k\in\nabla_{\ell}}
{\sigma_{\ell}^2} \varphi_{\ell,k}^2(x)\\
&\le 
{C_{\varphi}^2M
\sum_{\ell=L'+1}^L 2^{-(\beta_1-\beta_0 )d\ell}}<\infty.
\end{align}
Hence, the sequence $\big\{\sum_{\ell=\ell_0}^{L} 
\sum_{k\in\nabla_{\ell}} Y_{\ell,k}(\omega') \sigma_{\ell} \varphi_{\ell,k}(x)\big\}_L$ is convergent in $L^2(\Omega',\Prob')$. The continuity of the inner product $\E_{\Prob'}[\cdot,\cdot]$ on $L^2(\Omega')$ in each variable yields
\begin{align}
\E_{\Prob'}[T(x_1)T(x_2)]
&=
\sum_{\ell=\ell_0}^{\infty} 
\sum_{k\in\nabla_{\ell}}
\sum_{\ell'=\ell'_0}^{\infty} 
\sum_{k'\in\nabla_{\ell'}}
\E_{\Prob'}[
Y_{\ell,k}(\omega') \sigma_{\ell} \varphi_{\ell,k}(x_1)
Y_{\ell',k'}(\omega') \sigma_{\ell'} \varphi_{\ell',k'}(x_2)
]\\
&=\sum_{\ell=\ell_0}^{\infty} 
\sum_{k\in\nabla_{\ell}} \sigma_{\ell}^2
\varphi_{\ell,k}(x_1)\varphi_{\ell,k}(x_2),
\qquad \text{for any $x_1$, $x_2\in\dom$.}
\end{align} 
But we have
 $\sum_{\ell=\ell_0}^{\infty} 
 	\sum_{k\in\nabla_{\ell}} \sigma_{\ell}^2
 	|\varphi_{\ell,k}(x_1)\varphi_{\ell,k}(x_2)|
\le{C_{\varphi}^2M
\sum_{\ell=L'+1}^L 2^{-(\beta_1-\beta_0 )d\ell}}
$. Hence,
$$
\E_{\Prob'}[T(x_1)T(x_2)]=\sum_{j\ge1}\sigma_{j}^2\varphi_{j}(x_1)\varphi_{j}(x_2),
\qquad x_1,\,x_2\in \dom.
$$
Following {a} similar discussion, we see that the series $\sum_{j\ge1}\sigma_{j}^2 y_j\varphi_{j}(x)$ converges in $L^2(\Omega)$ for each $x\in\dom$, and has the covariance function $\sum_{\ell=\ell_0}^{\infty} 
\sum_{k\in\nabla_{\ell}} \sigma_{\ell}^2
\varphi_{\ell,k}(x_1)\varphi_{\ell,k}(x_2)$. Hence the law on $\bbR^{\dom}$ is the same. 
Thus, abusing the notation slightly we write $T(\cdot,\bs{y}):=T(\cdot,\omega')$, $y_{\ell,k}:=Y_{\ell,k}(\omega')$, $\Omega=\bbR^{\bbN}:=\Omega'$, $\scrF:=\scrF'$, $\Prob_{Y}:=\Prob'$, and $\E[\cdot]:=\E_{\Prob'}[\cdot]$.

Next, we discuss the applicability of the theory developed in Section \ref{sec:QMC error} to the wavelet stochastic model above.
We need to check Assumption \ref{assump:B}. 
%

Take $\theta\in(0,\frac{d}2({\beta_1}-{\beta_0}))$, 
and 
for $\xi=(\ell,k)$ let 
\begin{align}
\rho_{\xi}:=c2^{\theta |\xi|}=c2^{\theta \ell},\label{eq:def rho}
\end{align}
with some constant
 $0<c<{\ln 2}\big(M{C_{\varphi}}\sum_{\ell=\ell_0}^\infty 2^{\ell(\theta-\frac{d}2({\beta_1}-{\beta_0}))}\big)^{-1}$.

{Then,} by virtue of the locality property \eqref{eq:wavelet level-wise finite overlap} we have \eqref{eq:cond psi b1} as follows{:}
\begin{align}
\sup_{x\in \dom}\sum_{\xi}
\rho_{\xi}
|\sigma_{\xi}\varphi_{\xi}(x)|
\le
\sum_{\ell=\ell_0}^\infty
\rho_{\ell}
\sup_{x\in\dom}
\sum_{k\in\nabla_{\ell}}
|2^{-\frac{\beta_1d\ell}2}\varphi_{\ell,k}(x)|
\le&
cM{C_{\varphi}}\sum_{\ell=\ell_0}^\infty
2^{\theta \ell}
2^{-\frac{\beta_1d\ell}2}
2^{\frac{\beta_0 d}{2}\ell}<\ln2. 
\label{eq:psi rho cond Klaus}
\end{align}

Further, note that by reordering for sufficiently large $j$ we have 
\footnote{
To see this, first recall that there are $\mathcal{O}(2^{{\ell d}})$ wavelets at level $\ell$. 
Thus, for an arbitrary but 
sufficiently large $j$ we have 
$$
2^{\ell_jd}\lesssim j \lesssim 2^{(\ell_j+1)d}.
$$
for some $\ell_j$.

Let $\xi_j\in \nabla_{\ell_j}$ be the index corresponding to $j$. 
Since $|\xi_j| = \ell_j$, we have
\begin{align}
\sup_{x\in D} |\sigma_{j} \varphi_{j} (x)| 
= \sup_{x\in D} |\sigma_{\ell_j} \varphi_{\xi_j} (x)|
{=}
 C_{\varphi} 2^{-\frac{\beta_1 d}2 \ell_j} 2^{\frac{\beta_0 d}{2} \ell_j}
\lesssim C_{\varphi}{2^{\frac{d}2\beta^*}} j^{-\frac{\beta_1}2 +\frac{\beta_0}2}
{,\text{ for any }\beta^*>\beta_1-\beta_0.}
\end{align}
The opposite direction can be derived as, from $\beta_1-\beta_0>0$, 
\begin{align}
j^{-\frac{\beta_1}2+\frac{\beta_0}{2}}
\lesssim 
2^{{-\ell_j d(\frac12(\beta_1-\beta_0))}}
=\frac{1}{C_{\varphi}}
\sup_{x\in D}|\sigma_{j} \varphi_{j}(x)|.
\end{align}
The relation $\rho_j\sim j^{\frac{\theta}{d}}$ can be checked similarly.} 
\begin{align}
\sup_{x\in\dom}|\sigma_{j}\varphi_{j}(x)|
\sim
j^
{
-\frac12({\beta_1} - {\beta_0 }),
}\label{eq:Klaus reorder decay}
\end{align}
and
\begin{align}
\rho_{j}\sim j^{\frac{\theta}{d}}.
\end{align}
Thus, to have $\sum_{j\ge 1} \frac1{\rho_j}<\infty$, the weakest condition on the summability on $(1/\rho_j)$ for Assumption \ref{assump:B} to be satisfied, it is necessary (and sufficient) to
have $\theta > {d}$. 

The following proposition summarises the discussion above.
\begin{theorem}\label{thm:wavelet QMC error}
Suppose the random coefficient \eqref{eq:random coeff} is given by $T$ as in \eqref{eq:Klaus model} with $(\varphi_{\ell,k})$ that satisfies \eqref{eq:Klaus wavelet decay}, and non-negative numbers $(\sigma_ {\ell})$ that satisfy \eqref{eq:cond wavelet sigma}. Let $(\rho_{\xi})$ be defined by \eqref{eq:def rho}. Further, assume ${\beta_0}$ and $\beta_1$ satisfy
\begin{align}
\frac{2}{q} < {\beta_1} - {\beta_0 },\label{eq:cond beta01}
\end{align}
for some ${q} \in (0,1]$. 
Then, the reordered system $(\sigma_{j}\varphi_{j})$ with the reordered $(\rho_j)$ 
satisfies Assumption \ref{assump:B}, 
 and {under the same conditions on $w_j(t)$, $\alpha_j$, and $\varsigma_j$ as in Theorem \ref{thm:conv rate}} we have the QMC error {bound} \eqref{eq:error order} with this $q$.
\end{theorem}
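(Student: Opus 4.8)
The plan is to show that, for a suitable choice of the exponent $\theta$ in \eqref{eq:def rho}, the reordered system $(\sigma_j\varphi_j)$ together with the reordered sequence $(\rho_j)$ fulfils Assumption \ref{assump:B}, and then to quote Theorem \ref{thm:conv rate} verbatim. So the argument is essentially a matter of checking the two conditions \eqref{eq:cond psi b1} and \eqref{eq:cond 1/rho b2}, most of the analytic work having already been done in the discussion preceding the statement.

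First I would fix $q\in(0,1]$ as in the hypothesis \eqref{eq:cond beta01} and note that the open interval $\big(\tfrac{d}{q},\,\tfrac{d}{2}(\beta_1-\beta_0)\big)$ is non-empty precisely because $\tfrac{2}{q}<\beta_1-\beta_0$; choose any $\theta$ in it. Since $\theta<\tfrac{d}{2}(\beta_1-\beta_0)$, the series $\sum_{\ell\ge\ell_0}2^{\ell(\theta-\frac{d}{2}(\beta_1-\beta_0))}$ converges, so the constant $c$ in \eqref{eq:def rho} is well defined, and the computation \eqref{eq:psi rho cond Klaus} yields \eqref{eq:cond psi b1} with $\kappa<\ln 2$. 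Because the supremum in \eqref{eq:psi rho cond Klaus} involves only an unordered sum of non-negative terms over $\nabla$, it is invariant under the lexicographic reordering; hence (b1) holds for $(\sigma_j\varphi_j)$ with the reordered $(\rho_j)$.

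Next I would verify (b2). As noted above, $\rho_j\sim j^{\theta/d}$ after reordering, so $1/\rho_j\lesssim j^{-\theta/d}$; since $\theta>d/q$ gives $\theta q/d>1$, this yields $(1/\rho_j)\in\ell^{q}$, that is, \eqref{eq:cond 1/rho b2} with the prescribed $q\in(0,1]$. Therefore Assumption \ref{assump:B} holds for the reordered system, and in particular so does Assumption \ref{assump:Bprime}. Moreover, as established earlier in this section, the lexicographic reordering leaves the law of \eqref{eq:Klaus model} on $\bbR^{\dom}$ unchanged (the covariance function being an unordered sum of non-negative terms), so the coefficient \eqref{eq:sto model}, once reordered, is exactly of the form \eqref{eq:param PDE} analysed in Section \ref{sec:QMC error}; the measurability and almost sure well-posedness required for the QMC framework then follow from Assumption \ref{assump:Bprime} as in Remark \ref{rem:on bbdness}.

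Finally, with the stated choices of $w_j$, of $\alpha_j$ satisfying \eqref{eq:cond alphaj low up} (for instance $\alpha_j=1+\ln 2/\rho_j$), of $\varsigma_j$, and of the product weights $\gamma_\fraku$ as in \eqref{eq:weight}, Theorem \ref{thm:conv rate} applies with this $q$ and delivers the component-by-component construction together with the error bound \eqref{eq:error order}. The only genuinely substantive point is the compatibility of the two constraints on $\theta$ — the upper bound $\tfrac{d}{2}(\beta_1-\beta_0)$ forced by \eqref{eq:cond psi b1} and by the convergence of the series defining $c$, versus the lower bound $d/q$ forced by the $\ell^{q}$-summability in (b2) — and this is exactly what \eqref{eq:cond beta01} guarantees; everything else is a direct transcription of results already proved.
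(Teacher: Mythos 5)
Your proposal is correct and follows essentially the same route as the paper's own proof: choose $\theta\in\big(\tfrac{d}{q},\tfrac{d}{2}(\beta_1-\beta_0)\big)$ (non-empty by \eqref{eq:cond beta01}), verify \eqref{eq:cond psi b1} via \eqref{eq:psi rho cond Klaus} and \eqref{eq:cond 1/rho b2} via $\rho_j\sim j^{\theta/d}$, then invoke Theorem \ref{thm:conv rate}. The additional remarks on reordering invariance and well-posedness merely make explicit what the paper establishes in the discussion preceding the theorem.
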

\begin{proof}
{Take $\theta\in(\frac{d}q,\frac{d}2(\beta_1-\beta_0))$, and}
 define 
$(\rho_{\xi})$ as {in} \eqref{eq:def rho}, reorder the components lexicographically, and
denote {the reordered $(\rho_{\xi})$} by $(\rho_j)$. Then, we have \eqref{eq:cond 1/rho b2}
\begin{align}
\sum_{j\ge 1} \bigg( \frac1{\rho_j}\bigg)^{q}
\lesssim 
\sum_{j\ge 1} \bigg( \frac1{j}\bigg)^{ \frac{{q}\theta}d }<\infty.
\end{align}
Further, from ${\theta-\frac{\beta_1d}{2}+\frac{\beta_0 d}{2}}<0$ we have \eqref{eq:psi rho cond Klaus}, and thus \eqref{eq:cond psi b1} holds.  
Hence, from the discussion in this section Assumption \ref{assump:B} is  satisfied, and thus {in view of Theorem \ref{thm:conv rate}} we have  \eqref{eq:error order}.
\end{proof}
\subsection{H\"{o}lder smoothness of the realisations}\label{sec:Holder smoothness}
Often, random fields $T$ with realisations that are not smooth are regularly of interest. 
In this section, we see that the stochastic model we consider \eqref{eq:sto model} allows reasonably rough random fields (H\"{o}lder smoothness) for $d=1,2$. The result is shown via Sobolev embedding results. We provide a necessary and sufficient condition to have {specified} Sobolev smoothness (Theorem \ref{thm:Besov smoothness}). Recall that embedding results are in general optimal (see, for example, \cite[4.12, 4.40--4.44]{Adams.R_book_Sobolev}), and in this sense, we have a sharp condition for our model to have H\"{o}lder smoothness. 
A building block is a Besov characterisation of the realisations which is essentially
 {due to} Cioica et al. \cite[Theorem 6]{Cioica.P.A_etal_2012_BIT}. Here we {define} $s:=s(L):=\sum_{\ell=\ell_0}^{L}\#(\nabla_{\ell})$, that is, the truncation is considered in terms of the level $L$.
\begin{theorem}\label{thm:Besov smoothness}\textup{(\cite[Theorem 6]{Cioica.P.A_etal_2012_BIT})}
{Let $\Besovp,\Besovq\in[1,\infty)$, and }$t\in(d\max\{1/\Besovp-1,0\},t_*)$, where $t_*$ is the parameter in {(W\ref{eq:item wavelets Besov characterisation})}. Then, 
\begin{align}
t< d\Big(
\frac{\beta_1 - 1}2
\Big)\label{eq:cond t Besov}
\end{align}
if and only if $T\in B^t_{\Besovq}(L_{\Besovp}(\dom))$ a.s.
Further, if \eqref{eq:cond t Besov} is satisfied, then the stochastic model \eqref{eq:sto model} satisfies 
$\E[\norm{T^{s(L)}}_{B^t_{\Besovq}(L_{\Besovp}(\dom))}^{\Besovq}]\le \E[\norm{T}_{B^t_{\Besovq}(L_{\Besovp}(\dom))}^{\Besovq}]<\infty$ for all $L\in\bbN$.
\end{theorem}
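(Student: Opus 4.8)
The plan is to evaluate the wavelet characterisation of the Besov (quasi-)norm in (W\ref{eq:item wavelets Besov characterisation}) directly on the random field $T$ and on its level-$L$ truncation $T^{s(L)}$, and then to take expectations; this is essentially the argument of \cite[Theorem 6]{Cioica.P.A_etal_2012_BIT}, which I would follow. First I would use biorthogonality of $(\varphi_{\ell,k})$ and $(\widetilde{\varphi}_{\ell,k})$ to identify the random wavelet coefficients of $T=\sum_{\ell\ge\ell_0}\sum_{k\in\nabla_{\ell}}Y_{\ell,k}\sigma_{\ell}\varphi_{\ell,k}$ as $\langle T,\widetilde{\varphi}_{\ell,k}\rangle_{L^2(\dom)}=\sigma_{\ell}Y_{\ell,k}$, so that (W\ref{eq:item wavelets Besov characterisation}) yields
\begin{align}
\norm{T}_{B^t_{\Besovq}(L_{\Besovp}(\dom))}^{\Besovq}
=\sum_{\ell=\ell_0}^{\infty}
2^{\ell(t+d(\frac12-\frac1{\Besovp}))\Besovq}\,\sigma_{\ell}^{\Besovq}\,
\Big(\sum_{k\in\nabla_{\ell}}|Y_{\ell,k}|^{\Besovp}\Big)^{\Besovq/\Besovp}.
\end{align}

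Writing $N_\ell:=\#\nabla_{\ell}=C_{\nabla}2^{\ell d}$ and $S_\ell:=\sum_{k\in\nabla_{\ell}}|Y_{\ell,k}|^{\Besovp}$, I would next establish the two-sided bound $\E[S_\ell^{\Besovq/\Besovp}]\sim N_\ell^{\Besovq/\Besovp}$ uniformly in $\ell$ by elementary moment estimates for sums of i.i.d.\ absolute powers of a standard Gaussian (Jensen's inequality in one direction; Minkowski's inequality in $L^{\Besovq/\Besovp}$ when $\Besovq\ge\Besovp$, and Chebyshev-type concentration of $S_\ell/N_\ell$ when $\Besovq<\Besovp$, in the other). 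Substituting $\sigma_{\ell}=2^{-\beta_1 d\ell/2}$ and $N_\ell\sim 2^{\ell d}$ and collecting exponents, the $\ell$-th summand of $\E[\norm{T}^{\Besovq}_{B^t_{\Besovq}(L_{\Besovp}(\dom))}]$ is comparable to $2^{\ell\Besovq(t-d(\beta_1-1)/2)}$; hence $\E[\norm{T}^{\Besovq}_{B^t_{\Besovq}(L_{\Besovp}(\dom))}]<\infty$ exactly when \eqref{eq:cond t Besov} holds, and in that case $T\in B^t_{\Besovq}(L_{\Besovp}(\dom))$ almost surely.

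For the converse (when \eqref{eq:cond t Besov} fails, i.e.\ $t\ge d(\beta_1-1)/2$) I would argue pathwise: by the strong law of large numbers for the jointly independent family $(Y_{\ell,k})$ together with the geometric growth of $N_\ell$, one has $S_\ell/N_\ell\to m_{\Besovp}>0$ almost surely as $\ell\to\infty$, where $m_{\Besovp}$ is the common value $\E[|Y_{\ell,k}|^{\Besovp}]$; hence, almost surely and for all large $\ell$, the nonnegative $\ell$-th term of the series above is bounded below by a positive constant times $2^{\ell\Besovq(t-d(\beta_1-1)/2)}$, which does not tend to $0$, so the series diverges and $T\notin B^t_{\Besovq}(L_{\Besovp}(\dom))$ almost surely. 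Finally, for the truncation statement I would observe that the wavelet coefficients of $T^{s(L)}$ coincide with those of $T$ for $|\xi|\le L$ and vanish otherwise, so by (W\ref{eq:item wavelets Besov characterisation}) the quantity $\norm{T^{s(L)}}^{\Besovq}_{B^t_{\Besovq}(L_{\Besovp}(\dom))}$ is precisely the partial sum over $\ell_0\le\ell\le L$ of the termwise nonnegative series for $\norm{T}^{\Besovq}_{B^t_{\Besovq}(L_{\Besovp}(\dom))}$; thus $\norm{T^{s(L)}}^{\Besovq}_{B^t_{\Besovq}(L_{\Besovp}(\dom))}\le\norm{T}^{\Besovq}_{B^t_{\Besovq}(L_{\Besovp}(\dom))}$ pointwise, and taking expectations and invoking the first part gives $\E[\norm{T^{s(L)}}^{\Besovq}_{B^t_{\Besovq}(L_{\Besovp}(\dom))}]\le\E[\norm{T}^{\Besovq}_{B^t_{\Besovq}(L_{\Besovp}(\dom))}]<\infty$ for every $L\in\bbN$.

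The main obstacle is the converse implication of the equivalence: upgrading ``the expected norm is infinite'' to ``$T$ almost surely fails to lie in the space'' requires the pathwise SLLN / zero--one argument rather than a bare expectation computation, and one must also treat the two regimes $\Besovq\ge\Besovp$ and $\Besovq<\Besovp$ separately in the comparison $\E[S_\ell^{\Besovq/\Besovp}]\sim N_\ell^{\Besovq/\Besovp}$. Both points are already handled in \cite[Theorem 6]{Cioica.P.A_etal_2012_BIT}, so in our setting it suffices to transcribe that argument for the present wavelet model.
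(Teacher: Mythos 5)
Your proposal is correct and follows essentially the same route as the paper, which simply delegates the equivalence and the moment bound to the proof of \cite[Theorem 6]{Cioica.P.A_etal_2012_BIT} and then obtains the truncation inequality by termwise comparison of the (defined) wavelet Besov norm, exactly as you do. The only difference is that you unpack the cited argument (biorthogonality, the two-sided moment estimate $\E[S_\ell^{\Besovq/\Besovp}]\sim N_\ell^{\Besovq/\Besovp}$, and the pathwise SLLN step for the divergence direction), all of which is sound.
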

\begin{proof}
First, 
from the proof of \cite[Theorem 6]{Cioica.P.A_etal_2012_BIT}, 
we see that $T\in B^t_{\Besovq}(L_{\Besovp}(\dom))$ a.s., is equivalent to 
$$
\sum_{\ell=\ell_0}^{\infty}
2^{\ell(t+d(1/2-1/{\Besovp}) )\Besovq}
\sigma_{\ell}^{\Besovq}
(
\# \nabla_{\ell}
)^{\Besovq/\Besovp}
\sim 
\sum_{\ell=\ell_0}^{\infty}
2^{\ell\Besovq
(t-\frac{d}2(\beta_1-1) )}<\infty,
$$
which holds from the assumption $t< d\big(
\frac{\beta_1 - 1}2
\big)$. Similarly, from the proof of \cite[Theorem 6]{Cioica.P.A_etal_2012_BIT} we have 
$$
\E[\norm{T}_{B^t_{\Besovq}(L_{\Besovp}(\dom))}^{\Besovq}]{\lesssim}
\sum_{\ell=\ell_0}^{\infty}
2^{\ell(t+d(1/2-1/{\Besovp}) )\Besovq}
\sigma_{\ell}^{\Besovq}
(
\# \nabla_{\ell}
)^{\Besovq/\Besovp}<\infty.
$$
Finally, {from (W\ref{eq:item wavelets Besov characterisation}) we have } $\E[\norm{T^{s}}_{B^t_{\Besovq}(L_{\Besovp}(\dom))}^{\Besovq}]
{=
\sum_{\ell=\ell_0}^{s}
2^{\ell(t+d(1/2-1/{\Besovp}) )\Besovq}
\E\big[ \big(	\sum_{k\in\nabla_{\ell}}|Y_{\ell,k}|^\Besovp	\big)^{\Besovq/\Besovp} \big]}
\le \E[\norm{T}_{B^t_{\Besovq}(L_{\Besovp}(\dom))}^{\Besovq}]$, {completing} the proof.
\end{proof}
To establish the H\"{o}lder smoothness, we employ embedding results. To invoke {them}, we {first} establish {that} the realisations {are} continuous; we want the measurability, and want to keep the law of $T$ on $\bbR^{\dom}$.

The H\"{o}lder norm involves taking {the} supremum over the uncountable set $\dom$, and thus {whether the} resulting function $\Omega\ni \bs{y}\mapsto \norm{T(\cdot,\bs{y})}_{C^{t_1}(\bddom)}\in\bbR${, where $t_1\in(0,1]$ is a H\"{o}lder exponent,} is an $\bbR$-valued random variable {is not immediately clear}. We see that by the continuity the measurability is preserved. 

Sobolev embeddings are achieved by finding a suitable representative by changing values of functions on measure zero sets of $\dom$. This change could affect the law on $\bbR^{\dom}$, since it is determined by the laws of arbitrary finitely many random variables $(T(x_1),\dotsc,T(x_m))$ ($\{x_i\}_{i=1,\dotsc,m}\subset\dom$) on $\bbR^{m}$. To avoid this, we establish the existence of continuous modification, thereby taking the continuous element of a Besov function that respects the law of $T$ from the outset. 

We make an assumption on the covariance function so that realisations of $T$ have continuous paths. We assume there exist positive constants $\iota_1$, $C_{\mathrm{KT}}$, and $\iota_2(>d)$ satisfying 
\begin{align}
\E[|T(x_1)-T(x_2)|^{\iota_1}]\le 
C_{\mathrm{KT}}\norm{ x_1 - x_2 }_2^{\iota_2},
\qquad \text{ for any }x_1,x_2\in \dom.\label{eq:cond KT}
\end{align}
Then, by virtue of Kolmogorov--Totoki's theorem \cite[Theorem 4.1]{Kunita.H_2004_SDE_Levy_diffeomorphisms} $T$ has a continuous modification. Further, the continuous modification is uniformly continuous on $\dom$ and it can be extended to the closure $\bddom$.

A H\"{o}lder smoothness of $(\varphi_{k,l})$ is sufficient for \eqref{eq:cond KT} to hold.
\begin{prop}
Suppose that $(\sigma_{\ell})$ satisfies \eqref{eq:cond wavelet sigma}. 
Further, suppose that for each $(\ell,k)\in\nabla$, the function $\varphi_{\ell,k}$ is 
$t_0$-H\"{o}lder continuous {on $\dom$ for some  $t_0\in(0,1]$}. Then, $T$ has a modification that is uniformly continuous on $\dom$ and can be extended to the closure $\bddom$.
\end{prop}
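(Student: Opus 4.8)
The plan is to verify the moment condition \eqref{eq:cond KT} and then invoke Kolmogorov--Totoki's theorem, exactly as in the paragraph preceding the statement. First I would fix $x_1,x_2\in\dom$. As recorded earlier in this section, the series representing $T(x)$ converges in $L^2(\Omega)$ for every $x$, so $T(x_1)-T(x_2)$ is an $L^2(\Omega)$-limit of centred Gaussians and is therefore itself a centred Gaussian; hence for every $p\ge1$ one has $\E\big[|T(x_1)-T(x_2)|^p\big]=m_p\,\big(\E[|T(x_1)-T(x_2)|^2]\big)^{p/2}$, where $m_p:=\E[|N|^p]<\infty$ for $N\sim\mathcal{N}(0,1)$. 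Thus everything reduces to an upper bound on the variance
\begin{align*}
v(x_1,x_2):=\E\big[|T(x_1)-T(x_2)|^2\big]=\sum_{\ell=\ell_0}^{\infty}\sigma_\ell^2\sum_{k\in\nabla_\ell}\big|\varphi_{\ell,k}(x_1)-\varphi_{\ell,k}(x_2)\big|^2,
\end{align*}
the identity following from independence of $\{Y_{\ell,k}\}$ exactly as in the covariance computations above.

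Next I would estimate the inner sum at each level $\ell$ in two complementary ways. A term survives only if $x_1$ or $x_2$ lies in $\supp\varphi_{\ell,k}$, so by the finite-overlap property \eqref{eq:wavelet level-wise finite overlap} at most $2M$ indices $k\in\nabla_\ell$ contribute, and for each of them I would use either the crude bound $|\varphi_{\ell,k}(x_1)-\varphi_{\ell,k}(x_2)|\le 2\sup_{x\in\dom}|\varphi_{\ell,k}(x)|=2C_\varphi 2^{\beta_0 d\ell/2}$ from \eqref{eq:Klaus wavelet decay}, or the H\"older bound $|\varphi_{\ell,k}(x_1)-\varphi_{\ell,k}(x_2)|\lesssim 2^{(\beta_0 d/2+t_0)\ell}\norm{x_1-x_2}_2^{t_0}$. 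The H\"older bound is where the dyadic structure enters: although the hypothesis only states that each $\varphi_{\ell,k}$ is $t_0$-H\"older, for a wavelet basis $\varphi_{\ell,k}$ is a dyadic dilate and translate of finitely many mother functions (cf. $\varphi_{\ell,k}\sim 2^{\beta_0 d\ell/2}\psi(2^\ell(x-x_{\ell,k}))$ in (W\ref{eq:item wavelets growth})), so its $t_0$-H\"older seminorm on $\bddom$ grows at most like $2^{(\beta_0 d/2+t_0)\ell}$, uniformly in $k$. Combining these with $\sigma_\ell^2=2^{-\beta_1 d\ell}$ I would obtain
\begin{align*}
v(x_1,x_2)\ \lesssim\ \sum_{\ell=\ell_0}^{\infty}2^{-\beta_1 d\ell}\,\min\!\big\{\,2^{(\beta_0 d+2t_0)\ell}\norm{x_1-x_2}_2^{2t_0},\ 2^{\beta_0 d\ell}\,\big\},
\end{align*}
and then split the sum at the level $\ell^\ast\sim\log_2\big(1/\norm{x_1-x_2}_2\big)$ where the two alternatives balance (H\"older bound for $\ell\le\ell^\ast$, crude bound for $\ell>\ell^\ast$, the tail being summable thanks to $\beta_1-\beta_0>0$, which holds throughout this section). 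This yields $v(x_1,x_2)\lesssim\norm{x_1-x_2}_2^{2\gamma}$, up to a harmless logarithmic factor in the borderline case $2t_0=(\beta_1-\beta_0)d$, with $\gamma:=\min\{t_0,\tfrac{(\beta_1-\beta_0)d}{2}\}>0$; since $\dom$ is bounded (so the variance is in any case uniformly bounded), this holds for all $x_1,x_2\in\dom$.

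To finish, I would fix any $\gamma'\in(0,\gamma)$ (this absorbs the possible logarithm), so that $v(x_1,x_2)\lesssim\norm{x_1-x_2}_2^{2\gamma'}$ and hence $\E\big[|T(x_1)-T(x_2)|^p\big]\lesssim\norm{x_1-x_2}_2^{p\gamma'}$ for every $p\ge1$. Choosing $p$ so large that $p\gamma'>d$ then verifies \eqref{eq:cond KT} with $\iota_1=p$, $\iota_2=p\gamma'$, and $C_{\mathrm{KT}}$ the implied constant, and Kolmogorov--Totoki's theorem \cite[Theorem 4.1]{Kunita.H_2004_SDE_Levy_diffeomorphisms} then provides a modification of $T$ whose paths are continuous, indeed uniformly continuous, on $\dom$, and hence extend continuously to $\bddom$. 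The main obstacle I anticipate is justifying the $2^{(\beta_0 d/2+t_0)\ell}$ growth of the H\"older seminorm of $\varphi_{\ell,k}$ from the bare H\"older hypothesis --- this forces one to appeal to the dyadic scaling of the wavelet basis rather than to (W1)--(W6) as stated --- and, secondarily, organising the two-scale split so that a strictly positive exponent survives in every parameter regime, in particular when $2t_0\ge(\beta_1-\beta_0)d$, where the naive term-by-term estimate on $v$ diverges.
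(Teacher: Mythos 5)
Your proposal is correct, but it takes a genuinely different route from the paper. The paper's proof is a one-line variance bound: it reads the hypothesis as providing a \emph{single} H\"older constant $C$ valid for all $\varphi_{\ell,k}$ (it writes ``where $C$ is the $t_0$-H\"older constant''), bounds each $(\varphi_{\ell,k}(x_1)-\varphi_{\ell,k}(x_2))^2$ by $C\norm{x_1-x_2}_2^{2t_0}$, and sums $\sigma_\ell^2$ over \emph{all} indices using $\#\nabla_\ell= C_\nabla 2^{\ell d}$ and $\beta_1>1$, so that $\sigma_*^2\le C\norm{x_1-x_2}_2^{2t_0}\sum_{\ell,k}\sigma_\ell^2<\infty$; it then passes to the $2m$-th Gaussian moment with $m>d/(2t_0)$ exactly as you do, obtaining the full exponent $t_0$. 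Your concern --- that for an actual wavelet basis the $t_0$-H\"older seminorm of $\varphi_{\ell,k}$ grows like $2^{(\beta_0 d/2+t_0)\ell}$, so no uniform constant exists --- is legitimate, and your two-scale remedy (locality \eqref{eq:wavelet level-wise finite overlap} to reduce to $\lesssim 2M$ terms per level, the crude sup bound versus the level-dependent H\"older bound, and a dyadic split at $2^{-\ell^*}\sim\norm{x_1-x_2}_2$) is carried out correctly and yields the variance exponent $2\gamma$ with $\gamma=\min\{t_0,\tfrac{d}{2}(\beta_1-\beta_0)\}$, which still suffices for Kolmogorov--Totoki after choosing $p\gamma'>d$. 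What each approach buys: the paper's argument is shorter and gives the sharper exponent $t_0$, but only under the stronger (implicit) assumption of a uniform H\"older constant, and it does not use locality at all; yours applies to genuine wavelet systems with growing H\"older seminorms and exploits (W3) and (W6), but requires the extra structural input that $\varphi_{\ell,k}$ are dyadic dilates of a fixed mother function --- which, as you note, is not literally among (W1)--(W6) --- and delivers a possibly smaller H\"older exponent for the covariance increment. Since the conclusion is only path continuity, either exponent is adequate.
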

\begin{proof}
{It suffices to show \eqref{eq:cond KT} holds.} 
Fix $x_1,x_2\in \dom$ arbitrarily. First note that 
\begin{align}
{\sigma_*^2}:=\E[|T(x_1)-T(x_2)|^{2}]
&=\sum_{\ell=\ell_0}^{\infty} \sum_{k\in\nabla_{\ell}} 
\sigma_{\ell}^2( \varphi_{\ell,k}(x_1) - \varphi_{\ell,k}(x_2))^2\\
&{\le C} \norm{x_1 - x_2}_2^{2t_0} \sum_{\ell=\ell_0}^{\infty} \sum_{k\in\nabla_{\ell}} 
\sigma_{\ell}^2<\infty,
\end{align} 
{where $C$ is the $t_0$-H\"{o}lder constant.} 
Then, since $T(x_1)-T(x_2)\sim \mathcal{N}(0,{\sigma_*^2})$ we observe that, with $X_{\mathrm{std}}\sim\mathcal{N}(0,1)$ we have
\begin{align}
\E[|T(x_1)-T(x_2)|^{2m}]
&=\E[|X_{\mathrm{std}} \sigma_*|^{2m}]
=\sigma_*^{2m}  \E[|X_{\mathrm{std}}|^{2m}] \\
&{\le C^m}\norm{x_1 - x_2}_2^{2t_0 m} 
\bigg(
\sum_{\ell=\ell_0}^{\infty} \sum_{k\in\nabla_{\ell}} \sigma_{\ell}^2
\bigg)^{{m}}
\E[|X_{\mathrm{std}}|^{2m}],
\text{ for any }m\in\bbN.
\end{align}
Taking $m>\frac{d}{2t_0}$, we have \eqref{eq:cond KT} 
{with $\iota_1:=2m$, 
$C_{\mathrm{KT}}:= C^m
\big(
\sum_{\ell=\ell_0}^{\infty} \sum_{k\in\nabla_{\ell}} \sigma_{\ell}^2
\big)^{m}
\E[|X_{\mathrm{std}}|^{2m}]$, and $\iota_2:=2t_0m(>d)$}
, and thus the statement follows. 
\end{proof}
In the following, we assume $\varphi_{\ell,k}$ is $t_0$-H\"{o}lder continuous on $\dom$ {for some $t_0\in(0,1]$}. Note that under this assumption, we may assume $\varphi_{\ell,k}$ is continuous on $\bddom$.

Using {the fact} that $T(\cdot,\bs{y})\in B^t_{2}(L_{2}(\dom))=H^{t}(\dom)$ a.s., 
now we establish expected the H\"{o}lder smoothness of the random coefficients $a$. From this result, for example, the convergence rate of the finite element method using the piecewise linear functions are readily obtained. 

{First, we argue that to analyse the H\"{o}lder smoothness of the realisations of $a$, without loss of generality we may assume $a_*\equiv0$ and $a_0\equiv1$.} {To see this, suppose $a_*$, $a_0$ in \eqref{eq:sto model} satisfies $a_*,a_0\in C^{t_1}(\bddom)$ for some $t_1\in(0,1]$.} By virtue of
\begin{align}
|\e^{a} - \e^{b}|=\bigg|\int_{a}^{b}\e^{r}\dr \bigg|
\le \max\{\e^{a},\e^{b}\}|b-a|\le (\e^{a}+\e^{b})|b-a|
\text{ for all }a,b\in\bbR,
\label{eq:exp bd}
\end{align}
for any $x_0,x_1,x_2\in\bddom$ ($x_1\neq x_2$) we have
\begin{align}
{\big|\e^{T(x_0)}\big|+\frac{\big|\e^{T(x_1)}-\e^{T(x_2)}\big|}{\norm{x_1-x_2}_2^{t_1}}
\le 
\Big(\sup_{x\in\bddom}\big|\e^{T(x)}\big|\Big)
\bigg(1+2
\frac{|T(x_2) - T(x_3)|}{\norm{x_1-x_2}_2^{t_1}}
\bigg).}
\end{align}
{Noting that 
$\norm{a_0 \e^{T}}_{C^{t_1}(\bddom)}\le C_{t_1} 
\norm{a_0}_{C^{t_1}(\bddom)}\norm{\e^{T}}_{C^{t_1}(\bddom)}
$ (see, for example \cite[p. 53]{Gilbarg.D_T_book_2001}) we have}
\begin{align}
\norm{a}_{C^{t_1}(\bddom)}
&\le{
\norm{a_*}_{C^{t_1}(\bddom)}
+
C_{t_1}\norm{a_0}_{C^{t_1}(\bddom)}
\bigg(\sup_{x\in\bddom}|\e^{T(x)}|\bigg)}
\bigg(1+2
\norm{T}_{C^{t_1}(\bddom)}
\bigg).
\end{align}
{Thus, given $a_*,a_0\in C^{t_1}(\bddom)$, it suffices to show $\bigg(\sup_{x\in\bddom}|\e^{T(x)}|\bigg)
\bigg(1+2
\norm{T}_{C^{t_1}(\bddom)}
\bigg)<\infty$ for the H\"{o}lder smoothness of the realisations of $a$. Therefore, in the rest of this subsection, for simplicity we assume $a_*\equiv0$ and $a_0\equiv1$.}

In order to invoke embedding results we assume $t_*$ satisfies $\frac{d}2<\lfloor t_*\rfloor$, {and that} we can take $t\in(0,\frac{d}2(\beta_1-1))$ such that $\frac{d}2<\lfloor t\rfloor$.
For {the latter} to hold, taking $\beta_1\ge3$, implying $\frac{d}2<\lfloor \frac{d}2(\beta_1-1)\rfloor$, is sufficient, which is always satisfied for the presented QMC theory to be applicable. See Remark \ref{rem:regularity allowed}. 

{Now, take $t_1\in(0,1]\cap(0,\lfloor t\rfloor-\frac{d}2]$. Then, from $B^t_{2}(L_{2}(\dom))=H^{t}(\dom)$ and the Sobolev embedding (for example, \cite[Theorem 4.12]{Adams.R_book_Sobolev}) we have}
\begin{align}
\norm{a}_{C^{t_1}(\bddom)}&\lesssim 
\Big(\sup_{x\in\bddom}|a(x)|\Big)
\bigg(1+2
\norm{T}_{B^t_{2}(L_{2}(\dom))}
\bigg),
\label{eq:Holder bd a}
\end{align}
Similarly, we have 
$\norm{a^{s}}_{C^{t_1}(\bddom)}
\lesssim
\big(\sup_{x\in\bddom}|a^s(x)|\big)
\Big(1+2
\norm{T^s}_{B^t_{2}(L_{2}(\dom))}
\Big)$. 

We want to take the expectation of $\norm{a}_{C^{t_1}(\bddom)}$. To do this, we establish the $\scrF/\mathcal{B}(\bbR)$-measurability of $\bs{y}\mapsto\norm{a(\cdot,\bs{y})}_{C^{t_1}(\bddom)}$. Taking continuous modifications of $T$ if necessary, we may assume paths of $a$ are continuous on $\bddom$. 
Then, from the continuity of the mapping
$$
\{(x_1,x_2)\in\bddom\times\bddom\mid x_1\neq x_2 \}\ni (x_1,x_2)\mapsto
\frac{|a(x_1)-a(x_2)|}{\norm{x_1-x_2}_2^{t_1}}\in\bbR,
$$
with a countable set 
$G$ that is dense in $\{(x_1,x_2)\in\bddom\times\bddom\mid x_1\neq x_2 \}\subset\bbR^d\times\bbR^d$ we have
\begin{align}
\sup_{x_1,x_2\in \bddom,\,x_1\neq x_2}
\frac{|a(x_1)-a(x_2)|}{\norm{x_1-x_2}_2^{t_1}}
=
\sup_{(x_1,x_2)\in G}
\frac{|a(x_1)-a(x_2)|}{\norm{x_1-x_2}_2^{t_1}}.
\end{align}
Thus, $\bs{y}\mapsto \norm{a(\cdot,\bs{y})}_{C^{t_1}(\bddom)}$, and by the same argument, $\bs{y}\mapsto \norm{a^s(\cdot,\bs{y})}_{C^{t_1}(\bddom)}$, are $\mathcal{B}(\bbR^\bbN)/\mathcal{B}(\overline{\bbR})$-measurable, where $\overline{\bbR}:=\bbR\cup\{-\infty\}\cup\{\infty\}$.

From 
$\E[\norm{T^s}_{C(\bddom)}]\lesssim
\E[\norm{T^s}_{ B^t_{2}(L_{2}(\dom)) }]\le 
\E[\norm{T}_{ B^t_{2}(L_{2}(\dom)) }]
\lesssim
{\big(\sum_{\ell=\ell_0}^{\infty}
2^{\ell
(2t-{d}(\beta_1-1) )}
\big)^{1/2}}<\infty$ independently of $s$, and
$\E[\norm{T}_{C(\bddom)}]\lesssim{\big(\sum_{\ell=\ell_0}^{\infty}
2^{\ell
(2t-d(\beta_1-1) )}
\big)^{1/2}}<\infty$, following the discussion by Charrier \cite[Proof of Proposition 3.10]{Charrier.J_2012_SINUM_strong_weak} utilising the Fernique's theorem there exists a constant $M_p>0$ independent of $p$ such that
\begin{align}
\E[\exp(p\norm{T^s(\cdot,\bs{y})}_{C(\bddom)})]
\Big\},
\E[\exp(p\norm{T(\cdot,\bs{y})}_{C(\bddom)})]
\Big\}<M_{p},
\end{align}
for any $p\in (0,\infty)$. Together with, $\sup_{x\in\bddom}|a(x)|\le \exp(\sup_{x\in\bddom}|T(x)|)$, we have 
$$
\E[(\sup_{x\in\bddom}|a^s(x)|)^{ 2p }],
\E[(\sup_{x\in\bddom}|a(x)|)^{ 2p }]<M_{2p},
\quad \text{for any $p\in(0,\infty)$.}$$
Hence, from \eqref{eq:Holder bd a} we conclude that
\begin{align}
\E[\norm{a}_{C^{t_1}(\bddom)}^{ p }]
&\le
\max\{1,2^{p-1/2}\}\sqrt{\E\Big[
\Big(\sup_{x\in\bddom}|a(x)|\Big)^{ 2p }
\Big]}
\sqrt{1+4^{ p }
\E\big[
\norm{T}_{B^t_{2}(L_{2}(\dom))}^{2p}
\big]} <\infty.
\end{align}
Similarly, we have
$$\E[\norm{a^s}_{C^{t_1}(\bddom)}^{ p }]
\le
\max\{1,2^{p-1/2}\}\sqrt{\E\Big[
\Big(\sup_{x\in\bddom}|a^s(x)|\Big)^{ 2p }
\Big]}
\sqrt{1+4^{ p }
\E\big[
\norm{T^s}_{B^t_{2}(L_{2}(\dom))}^{2p}
\big]}<\infty,
$$
where the right hand side can be bounded independently of $s$.
\begin{rem}\label{rem:regularity allowed}
We provide a remark regarding the smoothness of the realisations that the currently developed theory permits. From the conditions imposed on the basis functions, e.g., the summability conditions, random fields with smooth realisations are easily in the scope of the QMC theory applied to PDEs. Here, the capability of taking reasonably rough random field into account is of interest.
Typically, $L^2$ wavelet Riesz basis have growth rate $\beta_0=1$. 
Then, the condition $2 < \beta_1   -  \beta_0 ${, the weakest condition on $\beta_1$ in Theorem \ref{thm:wavelet QMC error},} is equivalent to 
\begin{align}
\beta_1=3+\ep,\quad\text{ for any }\ep>0.
\tag{A1}\label{eq:cond beta}
\end{align}
In view of Theorem \ref{thm:Besov smoothness}, the smaller the decay rate $\beta_1$ of $\sigma_\ell$ is, the rougher the realisations are. We discuss the smoothness of the realisations achieved by $\beta_1=3+\ep$ for some small $\ep>0$, one of the values of $\beta_1$ as small as possible. In applications, $d=1,2,3$ are of interest. See Table \ref{table:smoothness}, which summarises the condition \eqref{eq:cond t Besov} with \eqref{eq:cond beta}.

From $B^t_{2}(L^{2}(\dom))=H^{t}(\dom)$, in view of Theorem \ref{thm:Besov smoothness},  
$T(\cdot, \bs{y})\in H^{t}(\dom)$ a.s. if and only if the condition \eqref{eq:cond t Besov}, holds. 
We recall the following embedding results. See, for example, \cite[p. 85]{Adams.R_book_Sobolev}.
\begin{table}
\centering
\begin{tabular}{|c|c|c|}
\hline 
 & $t<\frac{d}2(\beta_1 - 1)$ & $t<\frac{d}2(\beta_1 - 1)$ with $\beta_1=3+\ep$ for some ($\ep>0$) \tabularnewline
\hline 
\hline 
$d=1$ & $t<(\beta_1 - 1)/2$ & $t<1 + \ep$\tabularnewline
\hline 
$d=2$ & $t<(\beta_1 - 1)$ & $t<2 + \ep$\tabularnewline
\hline 
$d=3$ & $t<\frac{3}2(\beta_1 - 1)$ & $t<3+\ep$\tabularnewline
\hline 
\end{tabular}
\caption{Range of the exponent $t$ for realisations of $T$ to have $H^{t}$-smoothness
and the smallest bound on $t$ allowed by the presented QMC theory when $\beta_0 =1$}
\label{table:smoothness}
\end{table}
For $d=1,2$, and $3$ respectively, with $\beta_1=3+\ep$ the condition \eqref{eq:cond t Besov} reads $t<1 + \ep$, $t<2 + \ep$, and 
$t<3 + \ep${, where we rescaled $\ep$ depending on $d$.}

For $d = 1, 2,$ this seems to be rough enough. 
For $d = 1$, $H^1(\dom)$ is characterised as a space of absolutely continuous functions.  Since in practice we employ a suitable numerical method to solve PDEs, the validity of point evaluations demands $a(\cdot,\bs{y})\in  C(\dom)$. 
For $d = 2$, we know $H^2(\dom)$ can be embedded to $C^{0,t}(\bddom)$, ($t \in  (0,1)$). This is a standard
assumption to have the convergence of FEM with the hat function elements on polygonal domains.

For $d = 3$, we know $H^3(\dom) = H^{1+2}(\dom)$ can be embedded to $C^{1,t}(\bddom)$, ($t\in  (0,2 - \frac32] = 
(0, \frac12]$). 
In practice, we employ quadrature 
rules to compute the integrals in the bilinear form. That $a \in  C^{1,t}(\bddom)$ ($t \in(0, {\frac12}]$) is a reasonable 
assumption to get the convergence rate for FEM with quadratures. As a matter of fact, we want $a(\cdot,\bs{y}) \in  C^{2r}(\bddom)$
 to have the $\mathcal{O}(H^{2r})$ convergence of the expected $L^{p}(\Omega)$-moment of $L^2(\dom)$-error even for $C^2$-bounded domains. See 
\cite[Remark 3.14]{Charrier.J_Scheichl_Teckentrup_2013_FE_multilevel}, and
\cite[Remark 3.2]{Teckentrup.A.L_etal_2013_further_analysis}. 

Finally, we note these embedding results are in general optimal (see, for example, \cite[4.12, 4.40--4.44]{Adams.R_book_Sobolev}), and in this sense, together with the characterisation (Theorem \ref{thm:Besov smoothness}), the condition for our model to have H\"{o}lder smoothness is sharp.
\bksq
\end{rem}
\subsection{Dimension truncation error}
In this section we estimate the truncation error 
$
\E \norm{ u - u^s}_V
$. Again, the truncation is considered in terms of the level $L$ and we let $s=s(L)=\sum_{\ell=\ell_0}^{L}\#(\nabla_{\ell})$. Let $a^s$ be $a(x,\bs{y})$ with $y_j=0$ for $j>s$, and define $\amin^s(\bs{y})$, $\amax^s(\bs{y})$ accordingly. By a variant of Strang's lemma, we have
\begin{align}
\norm{ u - u^s}_V
\le \norm{a-a^s}_{L^\infty(D)}\frac{\norm{f}_{\Vdual}}{\amin(\bs{y})\amin^s(\bs{y})}
\label{eq:Strang}
\end{align}
for $\bs{y}$ such that $\amin(\bs{y})$, $\amin^s(\bs{y})>0$. This motivates us to derive an estimate on $\norm{a-a^s}_{L^\infty(D)}$.

Assuming a differentiability and a further summability {of $(\psi_j)$}, Charrier \cite{Charrier.J_2012_SINUM_strong_weak} obtained estimates on the moments of $\norm{a-a^s}_{C(\bddom)}$ and thus $\norm{ u - u^s}_V$, by the inequality of the same form {as} \eqref{eq:Strang}. 
See \cite[Proposition 3.4 and Theorem 4.2, together with Assumption 3.1]{Charrier.J_2012_SINUM_strong_weak}. A similar argument is employed in \cite{GKNetal2014:lognormal}. Bearing in mind the argument by Charrier uses the Fernique's theorem for separable Banach spaces \cite[Theorem 2.2, Proposition 2.3]{Charrier.J_2012_SINUM_strong_weak}, the same argument is applicable here by replacing $L^\infty(D)$ with $C(\bddom)$, which can be done following the discussion in Section \ref{sec:Holder smoothness}.

In the present paper, however, we impose no further smoothness condition of the wavelet basis functions. 
{We note that from \eqref{eq:Klaus reorder decay} and \eqref{eq:cond beta01}, we have $\sum_{j\ge1}\sup_{x\in\dom}|\sigma_j\varphi_{j}|^p<\infty$ for some $p\in(0,1]$. Thus, the theory developed by Graham et al. \cite{GKNetal2014:lognormal} can be applied to the scaled wavelet basis $\varphi_{\ell,k}\sigma_{\ell}$, which in turn, together with the truncation error estimate we obtain in the following, shows that in the theory developed in \cite{GKNetal2014:lognormal}, the assumption \cite[Assumption A2 (b)]{GKNetal2014:lognormal} that is used to obtain a truncation error estimate \cite[Theorem 8]{GKNetal2014:lognormal} is in general, in particular, for a wide class of wavelets basis, is not necessary.}
\begin{prop}
Let $u$ be the solution of the variational problem \eqref{eq:wk formulation param} with the coefficient given by the stochastic model \eqref{eq:sto model} defined with \eqref{eq:Klaus model} and \eqref{eq:cond wavelet sigma}. Let $u^{s(L)}$ be the solution of the same problem but with $y_j:=0$ for {$j>s(L)$}. 
Suppose $t\in(0,t_*)$, where $t_*$ is the parameter in (W{\ref{eq:item wavelets Besov characterisation}}), satisfies
$
t< d\big(
\frac{\beta_1 - 1}2
\big)$.
Then, we have 
\begin{align}
\E[\big\| u - u^{s(L)}\big\|_V]
\lesssim
\Big(\sum_{\ell=L+1}^{\infty}
2^{\ell
(2t-d(\beta_1-1) )}\Big)^{\frac12}.
\end{align}
\end{prop}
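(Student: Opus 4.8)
The plan is to convert the pathwise Strang-type bound \eqref{eq:Strang} into an expectation estimate by isolating the factor $\norm{a-a^{s}}$ from the factor $1/(\amin\amin^{s})$, exactly the strategy used by Charrier and in \cite{GKNetal2014:lognormal} but carried out with $C(\bddom)$ in place of $L^{\infty}(\dom)$. First I would recall from Section \ref{sec:Holder smoothness} that, after passing to the continuous modification of the paths, $\amin(\bs{y})$ and $\amin^{s}(\bs{y})$ are positive on a set of full $\Prob_{Y}$-measure, all suprema over $\bddom$ are genuine random variables, and, crucially, the quantities $\sup_{\bddom}\e^{\pm T}$ and $\sup_{\bddom}\e^{\pm T^{s}}$ have finite moments of every order, \textbf{uniformly in the truncation level} (the Fernique argument there gives $\E[\exp(p\norm{T^{s}}_{C(\bddom)})]\le M_{p}$ independently of $s$, and the same for $T$).

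Next I would write $a-a^{s}=a_{0}\,(\e^{T}-\e^{T^{s}})$ and apply \eqref{eq:exp bd} pointwise, so that $\norm{a-a^{s}}_{C(\bddom)}\le \norm{a_{0}}_{C(\bddom)}\bigl(\sup_{\bddom}\e^{T}+\sup_{\bddom}\e^{T^{s}}\bigr)\norm{T-T^{s}}_{C(\bddom)}$, while $1/\amin(\bs{y})\le (\inf_{\bddom}a_{0})^{-1}\exp(\norm{T}_{C(\bddom)})$ and likewise for $\amin^{s}$ (using $a_{*}\ge 0$). Inserting these into \eqref{eq:Strang} gives $\norm{u-u^{s}}_{V}\le C\,Z(\bs{y})\,\norm{T-T^{s}}_{C(\bddom)}$, where $C$ collects $\norm{f}_{\Vdual}$, $\norm{a_{0}}_{C(\bddom)}$, $(\inf_{\bddom}a_{0})^{-2}$, and $Z(\bs{y})$ is a product of terms $\exp(c\norm{T}_{C(\bddom)})$, $\exp(c\norm{T^{s}}_{C(\bddom)})$. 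The Cauchy--Schwarz inequality in $\Prob_{Y}$ then yields $\E\norm{u-u^{s}}_{V}\le C\,(\E[Z^{2}])^{1/2}\,(\E[\norm{T-T^{s}}_{C(\bddom)}^{2}])^{1/2}$, and by the uniform Fernique bounds above $\E[Z^{2}]$ is finite and bounded independently of $s$ and $L$.

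It then remains to estimate $\E[\norm{T-T^{s}}_{C(\bddom)}^{2}]$ for $s=s(L)$. The tail $T-T^{s}=\sum_{\ell>L}\sum_{k\in\nabla_{\ell}}Y_{\ell,k}\sigma_{\ell}\varphi_{\ell,k}$ has dual wavelet coefficients $\langle T-T^{s},\widetilde{\varphi}_{\ell,k}\rangle=Y_{\ell,k}\sigma_{\ell}$ for $\ell>L$ and $0$ otherwise, so the wavelet characterisation (W\ref{eq:item wavelets Besov characterisation}), together with the same computation used to prove Theorem \ref{thm:Besov smoothness}, gives $\E[\norm{T-T^{s}}_{B^{t}_{2}(L_{2}(\dom))}^{2}]=\sum_{\ell>L}2^{2\ell t}\sigma_{\ell}^{2}\,\#\nabla_{\ell}=C_{\nabla}\sum_{\ell>L}2^{\ell(2t-d(\beta_{1}-1))}$, which is finite precisely because $t<\tfrac{d}{2}(\beta_{1}-1)$. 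In the regime $t>d/2$ considered in Section \ref{sec:Holder smoothness} one has $B^{t}_{2}(L_{2}(\dom))=H^{t}(\dom)\hookrightarrow C(\bddom)$, and since $\norm{T-T^{s}}_{B^{t}_{2}(L_{2}(\dom))}$ is a Gaussian-type norm its $L^{p}(\Prob_{Y})$-norms are all equivalent, whence $(\E[\norm{T-T^{s}}_{C(\bddom)}^{2}])^{1/2}\lesssim (\sum_{\ell>L}2^{\ell(2t-d(\beta_{1}-1))})^{1/2}$. (Alternatively, a direct level-wise estimate $\sup_{x}\sum_{k\in\nabla_{\ell}}|Y_{\ell,k}||\varphi_{\ell,k}(x)|\le M C_{\varphi}2^{\beta_{0}d\ell/2}\max_{k\in\nabla_{\ell}}|Y_{\ell,k}|$ from \eqref{eq:wavelet level-wise finite overlap} and \eqref{eq:Klaus wavelet decay}, with $\E[\max_{k\in\nabla_{\ell}}|Y_{\ell,k}|]\lesssim\sqrt{\ell}\lesssim 2^{\ell t}$ and the standard normalisation $\beta_{0}\le 1$ of (W\ref{eq:item wavelets growth}), reproduces the same geometric tail.) Combining with the previous paragraph finishes the proof.

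The genuinely delicate point is not summing the tail --- that is the elementary geometric computation already carried out for Theorem \ref{thm:Besov smoothness} --- but the passage through the nonlinearity $\e^{T}$ in the supremum norm: one must know that the random prefactors built from $\sup_{\bddom}\e^{\pm T}$, $\sup_{\bddom}\e^{\pm T^{s}}$ have all moments \emph{uniformly in $L$}, and that every supremum over $\bddom$ is measurable. Both are supplied by the continuous-modification and Fernique arguments of Section \ref{sec:Holder smoothness}, so the proof is essentially an assembly of those ingredients with the Strang bound \eqref{eq:Strang} and the wavelet-coefficient identity above.
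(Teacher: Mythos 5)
Your overall architecture is the same as the paper's: the Strang bound \eqref{eq:Strang}, a H\"older/Cauchy--Schwarz separation of the random prefactors from $\norm{T-T^{s(L)}}_{L^\infty(\dom)}$, uniform-in-$L$ Fernique-type moment bounds for $\exp(c\norm{T}_{L^\infty})$ and $\exp(c\norm{T^{L}}_{L^\infty})$, and a wavelet/Besov estimate of the tail. The prefactor handling is in fact slightly cleaner than the paper's (you apply Cauchy--Schwarz where the paper's display \eqref{eq:a trunc bd} writes a product of expectations), and the Fernique input is available either from Section \ref{sec:Holder smoothness} or, as the paper does, from \cite[Remark 2.2]{Bachmayr_etal_2016_ESAIM_part2}.

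There is, however, a genuine gap in the key step. You estimate the tail exclusively in $B^{t}_{2}(L_{2}(\dom))=H^{t}(\dom)$ and then invoke $H^{t}(\dom)\hookrightarrow C(\bddom)$, which forces $t>d/2$. The proposition claims the bound for \emph{every} $t\in(0,t_*)$ with $t<\tfrac{d}{2}(\beta_1-1)$, and the claimed rate $\big(\sum_{\ell>L}2^{\ell(2t-d(\beta_1-1))}\big)^{1/2}\sim 2^{-L(\frac{d}{2}(\beta_1-1)-t)}$ is \emph{strictly stronger} for smaller $t$; hence the range $t\le d/2$ cannot be recovered from the case you prove. The paper's proof avoids this by working in $B^{t}_{\Besovq}(L^{\Besovp_0}(\dom))$ with $\Besovp_0$ chosen so large that $d/\Besovp_0\le t$, which yields the embedding into $L^{\infty}(\dom)$ for arbitrarily small $t>0$; the apparent cost of increasing $\Besovp_0$ cancels in the final exponent, since
\begin{align*}
2^{\ell(t+d(\frac12-\frac{1}{\Besovp_0}))\Besovq}\,\sigma_\ell^{\Besovq}\,(\#\nabla_\ell)^{\Besovq/\Besovp_0}
= 2^{\ell\Besovq\,(t-\frac{d}{2}(\beta_1-1))}\cdot C_\nabla^{\Besovq/\Besovp_0},
\end{align*}
using $\#\nabla_\ell= C_\nabla 2^{\ell d}$ and the finiteness of $\E[|Y_{\ell,k}|^{\Besovp_0}]$. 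Replacing your $\Besovp=2$ step by this choice of $\Besovp_0$ (and taking $\Besovq=2$) closes the gap; your side remarks (the Gaussian moment-equivalence, and the level-wise $\max_k|Y_{\ell,k}|$ estimate) are then not needed.
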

\begin{proof}
{For $t\in(0,\frac{d}2(\beta_1-1))$, choose $\Besovp_0\in[1,\infty)$ such that $\frac{d}{\Besovp_0}\le t$ so that we can invoke the Besov embedding results.} 
Since $\max\{d(\frac{1}{\Besovp_0}-1),0\}<t$, from Theorem \ref{thm:Besov smoothness} there exists a set $\Omega_0\subset\Omega$ such that $\Prob(\Omega_0)=1$ and $T(\cdot,\bs{y})\in B^t_{\Besovq}(L^{\Besovp_0}(\dom))$ for all $\bs{y}\in\Omega_0$ with any {$\Besovq\in[1,\infty)$}. 
Then, letting $T^L(x,\bs{y}):=\sum_{\ell=\ell_0}^{L} 
\sum_{k\in\nabla_{\ell}}  y_{\ell,k}  \sigma_{\ell} \varphi_{\ell,k}(x)$, from the embedding result of Besov spaces (\cite[Chapter 7]{Adams.R_book_Sobolev}), and the characterisation by wavelets (W\ref{eq:item wavelets Besov characterisation}) for any $L$, $L'\ge1$ ($L\ge L'$) we have
\begin{align}
\norm{T^L(\cdot,\bs{y})-T^{L'}(\cdot,\bs{y})}_{L^{\infty}(\dom)}
&\lesssim
\norm{T^L(\cdot,\bs{y})-T^{L'}(\cdot,\bs{y})}_{B^t_{\Besovq}(L^{\Besovp_0}(\dom))} \\
&\sim
\bigg(
\sum_{\ell=L'+1}^L 
2^{\ell(t+d(1/2-1/{\Besovp_0})) \Besovq }
\bigg(
\sum_{k\in\nabla_{\ell}}
|\sigma_{\ell}  y_{\ell,k} |^{\Besovp_0}
\bigg)^{\Besovq/\Besovp_0}
\bigg)^{1/\Besovq}<\infty,
\end{align}
for all $\bs{y}\in\Omega_0$. Thus, the sequence $\{T^{L}(\cdot,\bs{y})\}_L$ ($\bs{y}\in\Omega_0$) is Cauchy, and thus convergent in $L^{\infty}(\dom)$. 
Hence, we obtain 
\begin{align}
\norm{T(\cdot,\bs{y})-T^{L}(\cdot,\bs{y})}_{L^{\infty}(\dom)}^{\Besovq}
\lesssim
\sum_{\ell=L+1}^{\infty}
2^{\ell(t+d(1/2-1/{\Besovp}) )\Besovq}
\bigg(
\sum_{k\in\nabla_{\ell}}
|\sigma_{\ell}  y_{\ell,k} |^{\Besovp}
\bigg)^{\Besovq/\Besovp}\qquad \text{a.s.,}
\end{align}
for all $\Besovp\in[1,\infty)$ such that $\frac{d}{\Besovp}\le t$. 
For such {$\Besovp$ and any $\Besovq\in[1,\infty)$}, from \cite[Proof of Theorem 6]{Cioica.P.A_etal_2012_BIT}, we have
\begin{align}
\E\big[
\norm{T(\cdot,\bs{y})-T^{L}(\cdot,\bs{y})}_{L^{\infty}(\dom)}^{\Besovq}
\big]\lesssim
\sum_{\ell=L+1}^{\infty}
2^{\ell(t+d(1/2-1/{\Besovp}) )\Besovq}
\sigma_{\ell}^{\Besovq}
(
\# \nabla_{\ell}
)^{\Besovq/\Besovp}
\sim 
\sum_{\ell=L+1}^{\infty}
2^{\ell\Besovq
(t-\frac{d}2(\beta_1-1) )}<\infty.
\label{eq:T trunc bd}
\end{align}
Further, {from \eqref{eq:exp bd}} we have
\begin{align}
&\E\Big[\norm{a(x,\bs{y})-a^{s(L)}(x,\bs{y})}_{L^{\infty}(\dom)}^2\Big]\notag\\
&\le 
(\sup_{x\in\dom}|a_0(x)|^2)
\E[\exp(2\norm{T(\cdot,\bs{y})}_{L^{\infty}(\dom)})
+
\exp(2\|T^L(\cdot,\bs{y})\|_{L^{\infty}(\dom)})]
\E\big[\norm{T-T^L}_{L^{\infty}(\dom)}^2\big].
\label{eq:a trunc bd}
\end{align}
The sequence $(\rho_\xi)$ defined by \eqref{eq:def rho}, when reordered, satisfies {$(1/\rho_j)\in\ell^{\frac{d}{\theta}+\ep}$ for any $\ep>0$}. Thus, from the proof of Corollary \ref{cor:pd uy lognormal}, {as in} \cite[Remark 2.2]{Bachmayr_etal_2016_ESAIM_part2}, we have 
\begin{align}
\max\Big\{
\E[\exp(2\norm{T(\cdot,\bs{y})}_{L^{\infty}(\dom)})],
\E[
\exp(2\|T^L(\cdot,\bs{y})\|_{L^{\infty}(\dom)})]
\Big\}<M_2,\end{align} where the constant $M_2>0$ is independent of $L$.

Together with \eqref{eq:Strang}, we have
\begin{align}
\E[\norm{ u - u^s}_V]
\le 
\norm{f}_{\Vdual}
\E\Big[
\frac{1}{(\amin(\bs{y}))^4}\Big]^{\frac14}
\E\Big[
\frac{1}{(\amin^s(\bs{y}))^4}\Big]^{\frac14}
\E[\norm{a-a^s}_{L^\infty(D)}^2]^{\frac12}<\infty,
\label{eq:utrunc reduced to atrunc}
\end{align}
where Cauchy--Schwarz inequality is employed in the right hand side of \eqref{eq:Strang}. To see the finiteness of the right hand side of \eqref{eq:utrunc reduced to atrunc}, note that  
$$\frac1{\amin(\bs{y})}\le \frac1{\inf_{x\in\dom} a_0(x)}\exp(\|T\|_{L^{\infty}(D)}),\ \frac1{\amin^s(\bs{y})}\le\frac1{\inf_{x\in\dom} a_0(x)} \exp(\|T^L\|_{L^{\infty}(D)}),$$ and further, 
from the same argument as above, we have
\begin{align}
\max\Big\{
\E[\exp(4\norm{T(\cdot,\bs{y})}_{L^{\infty}(\dom)})],
\E[
\exp(4\|T^L(\cdot,\bs{y})\|_{L^{\infty}(\dom)})]
\Big\}<M_4,
\end{align}
where the constant $M_4>0$ is independent of $L$.

Therefore, from \eqref{eq:T trunc bd}, \eqref{eq:a trunc bd}, and \eqref{eq:utrunc reduced to atrunc} we obtain
\begin{align}
\E[\big\| u - u^{s(L)}\big\|_V]
\lesssim
\E\big[\norm{T-T^L}_{L^{\infty}(\dom)}^2\big]^{\frac12}
\lesssim
\Big(\sum_{\ell=L+1}^{\infty}
2^{\ell
(2t-d(\beta_1-1) )}\Big)^{\frac12}.
\end{align}
\end{proof}
We conclude this section with a remark on other examples to which the currently developed QMC theory is applicable. Bachmayr et al. \cite{Bachmayr_etal_2016_ESAIM_part2} considered so-called functions $(\scbasis_j)$ with finitely overlapping supports, for example, indicator functions of a partition of the domain $\dom$. It is easy to find a positive sequence $(\rho_j)$ such that Assumption \ref{assump:B} holds, and thus Theorem \ref{thm:conv rate} readily follows. However, for these examples, due to the lack of smoothness it does not seem that it is easy to obtain a meaningful analysis as given above, and thus we forgo elaborating them.
\section{Concluding remark}\label{sec:conclusion}
We considered a QMC theory for a class of elliptic partial differential equations with a log-normal random coefficient. Using an estimate on the partial derivative with respect to the parameter {$y_{\fraku}$} that is of product form, we established a convergence rate $\approx1$ of randomly shifted lattice {rules}. Further, we considered a stochastic model with wavelets, and analysed the smoothness of the realisations, and truncation errors. 
\section*{Acknowledgement} I would like to express my sincere gratitude to Frances Y. Kuo, Klaus Ritter, and Ian H. Sloan for their stimulating comments.

\printbibliography
\Addresses
\end{document}